\title{Typical curvature behaviour of bodies of constant width}
\renewcommand{\thefootnote}{\fnsymbol{footnote}}
\author{Imre B\'ar\'any and Rolf Schneider}
\date{}
\newtheorem{Proposition}{Proposition}[section]
\newtheorem{Lemma}[Proposition]{Lemma}
\newtheorem{Theorem}[Proposition]{Theorem}
\newcommand{\K}{\mathcal{K}}
\newcommand{\inn}{{\rm int}\,}
\newcommand{\R}{\mathbb{R}}
\newcommand{\Sn}{\mathbb{S}^{n-1}}
\newcommand{\Hc}{\mathcal{H}}
\newcommand{\bd}{{\rm bd}\,}
\begin{document}

\maketitle

\begin{abstract}
It is known that an $n$-dimensional convex body which is typical in the sense of Baire category, shows a simple, but highly non-intuitive curvature behaviour: at almost all of its boundary points, in the sense of measure, all curvatures are zero, but there is also a dense and uncountable set of boundary points at which all curvatures are infinite. The purpose of this paper is to find a counterpart to this phenomenon for typical convex bodies of given constant width. Such bodies cannot have zero curvatures. A main result says that for a typical $n$-dimensional convex body of constant width $1$ (without loss of generality), at almost all boundary points, in the sense of measure, all curvatures are equal to $1$. (In contrast, note that a ball of width $1$ has radius $1/2$, hence all its curvatures are equal to $2$.) Since the property of constant width is linear with respect to Minkowski addition, the proof requires recourse to a linear curvature notion, which is provided by the tangential radii of curvature. 

\vspace{2mm}

\noindent AMS 2010 {\it Mathematics subject classification}: Primary 52A20, Secondary 53A07, 54E52
\end{abstract}

\renewcommand{\thefootnote}{{}}
\footnote{Partially supported by ERC Advanced Research Grant no 267165 (DISCONV). The first
author was supported by Hungarian National Foundation Grant K 83767. }

\section{Introduction}

A general convex body in ${\mathbb R}^n$ shows, even without differentiability assumptions, a rather regular curvature behaviour at almost all of its boundary points. According to a well-known theorem of Aleksandrov \cite{Ale39}, almost all boundary points of a convex body, in the sense of $(n-1)$-dimensional Hausdorff measure $\Hc^{n-1}$, are normal points. At a normal point, all sectional curvatures exist, and they satisfy the theorems of Euler and Meusnier. From the generic viewpoint, the picture becomes partially simpler, partially more irregular. While a typical convex body, in the Baire category sense (the definition will be recalled in Section \ref{sec2}), is strictly convex and smooth (its boundary is $C^1$), it was proved by Zamfirescu \cite{Zam80} that at almost all of its boundary points the curvatures are zero. This was recently \cite{Sch14} supplemented by the observation that the boundary of a typical convex body contains an uncountable, dense set of points where all curvatures are infinite, and the set of these points has a spherical image of full $\Hc^{n-1}$ measure in the unit sphere $\Sn$.

An interesting and intensively studied subclass of the convex bodies are the bodies of constant width. It suffices to consider those of constant width $1$. A convex body $K$ has constant width $1$ if each two distinct parallel supporting hyperplanes of $K$ have distance $1$, equivalently, if the Minkowski sum of $K$ and its reflected image $-K$ is the unit ball. It follows from the latter fact that $0\le \varrho\le 1$ if $\varrho$ is a principal radius of curvature of $K$, taken at the (unique) boundary point with a given outer normal vector $u$. As shown by Aleksandrov \cite{Ale39}, the radii of curvature exist for $\Hc^{n-1}$-almost all unit vectors $u\in\Sn$. In analogy to the Baire category type results for general convex bodies, it is to be expected that for a typical convex body of constant width $1$, the radii of curvature exhibit a preference for attaining the values $0$ and $1$. Indeed, it was shown by Zamfirescu \cite{Zam94} that a for a typical convex domain of constant width $1$ in the plane, the radii of curvature attain only the values $0$ and $1$. 

The first of the following theorems can be regarded as a higher-dimensional extension of this result.

\begin{Theorem}\label{T1}
A typical convex body $K$ of constant width $1$ in $\R^n$ has the property that for $\Hc^{n-1}$-almost all $u\in \Sn$, either all radii of curvature of $K$ at the normal vector $u$ are equal to $1$ or at least one radius of curvature at $u$ is equal to $0$.
\end{Theorem}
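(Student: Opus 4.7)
I will apply Baire category in the complete metric space $\K_w$ of convex bodies of constant width $1$ in $\R^n$, which is a closed, Minkowski-convex subset of the space of all convex bodies under the Hausdorff metric, and therefore a Baire space. Since $\varrho_i(K,u) \in [0,1]$ automatically for $K \in \K_w$, the conclusion of Theorem \ref{T1} is equivalent to $\Hc^{n-1}(\mathrm{Bad}(K))=0$, where $\mathrm{Bad}(K) := \{u \in \Sn : 0 < \min_i \varrho_i(K,u) < 1\}$. Writing $\mathrm{Bad}(K) = \bigcup_m A_m(K)$ with $A_m(K) := \{u \in \Sn : 1/m \le \min_i \varrho_i(K,u) \le 1-1/m\}$, it suffices to show that, for each $m,\ell \ge 1$, the set
\[
B_{m,\ell} := \{K \in \K_w : \Hc^{n-1}(A_m(K)) \ge 1/\ell\}
\]
is closed and nowhere dense in $\K_w$; the typical $K \in \K_w$ will then lie in the complement of $\bigcup_{m,\ell}B_{m,\ell}$.

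For nowhere density I would produce, arbitrarily close to any given $K_0 \in \K_w$, a body $K' \in \K_w$ for which $\Hc^{n-1}(\mathrm{Bad}(K'))=0$. A natural candidate subfamily consists of "spherically faceted" bodies of constant width $1$: those whose boundary is a finite union of open spherical caps of radius $1$ (on which every principal radius of curvature equals $1$) meeting along a lower-dimensional singular skeleton whose image under the reverse Gauss map has $\Hc^{n-1}$-measure zero. In the plane these are the Reuleaux polygons used by Zamfirescu \cite{Zam94}; in higher dimensions they arise from Pál-type completions of finite subsets of $\R^n$ of diameter $1$, and can be shown Hausdorff-dense in $\K_w$ by approximating $K_0$ through fine finite point samples followed by completion. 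On such $K'$ one has $\min_i \varrho_i(K',u) \in \{0,1\}$ at $\Hc^{n-1}$-a.e.\ $u$, so $K' \notin B_{m,\ell}$.

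The technical crux is the closedness of $B_{m,\ell}$, i.e.\ the upper semi-continuity of $K \mapsto \Hc^{n-1}(A_m(K))$. Here the tangential radii of curvature enter as the "linear curvature notion" flagged in the abstract: for each fixed unit $t \in u^\perp$, the tangential radius $\tau_t(K,u)$ depends linearly on the support function $h_K$, so the pointwise identity $\min_i \varrho_i(K,u) = \inf_{|t|=1,\,t \in u^\perp} \tau_t(K,u)$ expresses $\min_i \varrho_i$ as a concave, infimum-of-linear functional of $h_K$, well-behaved along Minkowski affine combinations. Coupling this Minkowski-concavity with the weak convergence of the top area measure $S_{n-1}(K_k,\cdot) \rightharpoonup S_{n-1}(K,\cdot)$ under Hausdorff limits and the uniform bound $\varrho_i \in [0,1]$ should yield the required semi-continuity of $\Hc^{n-1}(A_m(\cdot))$. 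Transferring the Minkowski-linear information at the tangential level into a Hausdorff-topological semi-continuity statement about the measure of a principal-radius condition — precisely the difficulty caused by principal radii being nonlinear under Minkowski addition, and circumvented by the tangential linearization — is where I expect the main technical work to lie.
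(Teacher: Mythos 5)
Your overall strategy (Baire category, write the bad bodies as a countable union of closed nowhere dense sets, prove denseness of good bodies by an approximation) matches the paper's. But there are two genuine gaps in your proposal, each of which corresponds precisely to a place where the paper does something different and much more careful.

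\textbf{Closedness of $B_{m,\ell}$.} You define the bad sets by a measure threshold $\Hc^{n-1}(A_m(K))\ge 1/\ell$, where $A_m(K)$ is a level set of $u\mapsto \min_i\varrho_i(K,u)$. Closedness of $B_{m,\ell}$ would require upper semicontinuity of $K\mapsto\Hc^{n-1}(A_m(K))$, but the function $(K,u)\mapsto\min_i\varrho_i(K,u)$ is not semicontinuous in $K$ in any useful sense: an arbitrarily small Hausdorff perturbation of a smooth body of constant width with all radii $\tfrac12$ at some $u$ can push the radii at that $u$ to $0$ or to $1$. The level set $A_m(\cdot)$ is therefore extremely unstable, and the Minkowski-concavity of the infimum of tangential radii plus weak convergence of $S_{n-1}(K,\cdot)$ that you invoke does not control the Hausdorff measure of this level set (weak convergence of measures gives semicontinuity only for open/closed \emph{fixed} test sets, not for sets that vary with $K$). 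The paper sidesteps the entire issue by proving a \emph{pointwise} residual statement (Theorem~\ref{T3}): for typical $K$, the dichotomy holds at \emph{every} $u$. The complementary bad sets $\mathcal B_{k,m}$ are then defined by \emph{existence} of one bad $u$ in a compact cap $C(p_m)$, satisfying two geometric conditions with fixed quantizer $\alpha = k^{-1}$ (``$k^{-1}$-wide'' and ``contains a ball of radius $k^{-1}$'', via Lemma~\ref{L3}). Both conditions pass to Hausdorff limits, and the compactness of $C(p_m)$ yields closedness by a routine subsequence argument (Lemma~\ref{L4}). Theorem~\ref{T1} then follows from Theorem~\ref{T3} by restricting to the full-measure set of ordinary normal vectors.

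\textbf{Nowhere density.} You assume a Hausdorff-dense family of ``spherically faceted'' bodies of constant width $1$ in $\R^n$, arising by completing finite diameter-$1$ point sets, with boundary a union of unit spherical caps a.e. In the plane these are the Reuleaux polygons; but for $n\ge 3$ this family is not known to be dense, and a generic diametric completion of a finite set does \emph{not} have boundary composed of unit spherical caps. The paper flags exactly this (``In higher dimensions, there is no similar construction known'') and substitutes Theorem~\ref{T4}: one can approximate $W_0$ by $Q\in\mathcal W^n_1$ so that the curvature dichotomy holds for all $u$ in a fixed cap $C(v)$ --- not on all of $\Sn$. This partial approximation, produced via the generalized B\"uckner completion and the delicate argument of Section~\ref{sec4.1}, is the technical heart of the paper and is what makes the decomposition over finitely many caps $C(p_m)$ necessary. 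Your proposal omits this entirely and is therefore missing the main idea needed to carry out the nowhere density step in dimension $n\ge 3$.
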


It makes a difference whether we consider radii of curvature at given normal vectors or at given boundary points.

\begin{Theorem}\label{T2}
A typical convex body $K$ of constant width $1$ in $\R^n$ has the property that for $\Hc^{n-1}$-almost all $x\in \bd K$, all radii of curvature of $K$ at $x$ are equal to $1$. 

Moreover, $K$ has an uncountable, dense set of boundary points at which all radii of curvature are zero.
\end{Theorem}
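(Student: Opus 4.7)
The plan is to treat the two statements separately. The first follows by combining Theorem~\ref{T1} with the classical area formula for the reverse Gauss map, while the second calls for a Baire category argument inside the complete metric space $\mathcal{W}_1^n$ of convex bodies of constant width $1$ in $\R^n$, equipped with the Hausdorff metric.

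\emph{First statement.} By Aleksandrov's theorem, $\Hc^{n-1}$-almost every $x \in \bd K$ is a normal boundary point, so has a unique outer normal $u(x) \in \Sn$, and at such a point the radii of curvature of $K$ at $x$ coincide with those at $u(x)$. I would set
\[
N_1 := \{u \in \Sn : \text{all radii of curvature of } K \text{ at } u \text{ equal } 1\}, \qquad
N_0 := \{u \in \Sn : \text{some radius at } u \text{ vanishes}\}.
\]
Theorem~\ref{T1} gives $\Hc^{n-1}(\Sn \setminus (N_0 \cup N_1)) = 0$ for a typical $K$. Since the reverse spherical image map $\tau \colon \Sn \to \bd K$ is defined $\Hc^{n-1}$-a.e.\ and has Jacobian $\prod_{i=1}^{n-1} \varrho_i(u)$, the area formula yields
\[
\Hc^{n-1}(\tau(N_0)) = \int_{N_0} \prod_{i=1}^{n-1} \varrho_i(u) \, \D \Hc^{n-1}(u) = 0,
\]
because some factor vanishes on $N_0$. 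Combining with the fact that the image under $\tau$ of the remaining $\Hc^{n-1}$-null set is also null (the Jacobian is bounded by~$1$), $\Hc^{n-1}$-almost every $x \in \bd K$ satisfies $u(x) \in N_1$, so all its radii of curvature equal $1$.

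\emph{Second statement.} For this I would apply Baire category in $\mathcal{W}_1^n$. For each open ball $B \subset \R^n$ with rational centre and radius and each $m \in \mathbb{N}$, let $\mathcal{A}_{B,m} \subseteq \mathcal{W}_1^n$ consist of those $K$ admitting a boundary point $x \in B$ with an \emph{external tangent ball} of radius strictly less than $1/m$: there exist $y \in \R^n$ and an open neighbourhood $V$ of $x$ with $|y - x| < 1/m$ and $K \cap V \subseteq \overline{B(y, 1/m)}$. A standard continuity argument makes each $\mathcal{A}_{B,m}$ open. Granted density (the main obstacle, addressed below), a countable intersection over $B$ from a base of $\R^n$ and over $m$ produces a residual set $\mathcal{R} \subseteq \mathcal{W}_1^n$. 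For $K \in \mathcal{R}$, the corresponding set $S_m(K) := \{x \in \bd K : \text{external tangent ball of radius } < 1/m \text{ exists at } x\}$ is open in $\bd K$ and dense, so $S(K) := \bigcap_m S_m(K)$ -- precisely the set of boundary points at which all radii of curvature vanish -- is a dense $G_\delta$ in the compact metric space $\bd K$. Since $\bd K$ has no isolated points, Baire's theorem implies $S(K)$ is uncountable.

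\emph{The main obstacle: density of $\mathcal{A}_{B,m}$.} Given any $L \in \mathcal{W}_1^n$ and $\epsilon > 0$, I must produce $L' \in \mathcal{A}_{B,m}$ of constant width $1$ within Hausdorff distance $\epsilon$ of $L$. The natural idea is to create a genuine corner of $L'$ at some boundary point $x_0 \in B \cap \bd L$ by modifying the support function so that $h_{L'}(u) = \langle x_0, u \rangle$ on a small spherical cap $C$ around the outer normal $u_0$ at $x_0$; the constant-width constraint $h(u) + h(-u) = 1$ then forces $h_{L'}(v) = 1 - \langle x_0, -v \rangle$ on the antipodal cap $-C$, and outside $C \cup (-C)$ one blends smoothly back to $h_L$. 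The delicate point -- and the technical heart of the argument -- is to verify that this pieced-together function really is the support function of a convex body (sublinearity near $\partial C$) and that the resulting body genuinely has constant width $1$. I expect this to be handled either via the tangential radii-of-curvature machinery introduced earlier in the paper or by first establishing density of ``Reuleaux-type'' constant-width bodies with polytopal corners at prescribed locations, which is classical for $n = 2$ and should extend to higher dimensions.
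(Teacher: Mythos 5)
Your argument for the first statement is essentially sound and takes a different, legitimate route from the paper's. The paper works at the level of boundary points: at a normal point $x$, the inequality $\varrho(K,x,t)\le\bar\varrho_s(K,u,t)\le 1$ together with finiteness (normality) shows the sectional radii at $x$ are positive and finite, so Lemma~\ref{L0} applies, Theorem~\ref{T3} forces the tangential radii at $u$ to equal $1$, and hence $\mathcal{D}_x(K)$ is a unit ball. Your route instead pushes measure from $\Sn$ to $\bd K$ via the area formula for the reverse Gauss map. That works, but note two points of care: (i) you need that $\tau=x_K(\cdot)$ is $1$-Lipschitz for constant-width bodies (which holds because $K$ is a summand of $B^n$) both to apply the area formula and to dispose of the remaining $\Hc^{n-1}$-null set; (ii) your phrase ``the radii of curvature of $K$ at $x$ coincide with those at $u(x)$'' is not true in general for non-principal directions --- sectional and tangential radii agree only along principal axes, and one needs Lemma~\ref{L0} to conclude from ``all tangential radii at $u$ equal $1$'' that ``all sectional radii at $x$ equal $1$''. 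With these repairs, this half is fine.

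The second statement has a genuine gap. You explicitly concede that you cannot establish density of the sets $\mathcal{A}_{B,m}$, and the two repairs you sketch do not close it: the paper states plainly that no analogue of Reuleaux polygons is known in dimension $n\ge 3$, so ``density of Reuleaux-type constant-width bodies with polytopal corners at prescribed locations'' is not available; and invoking the ``tangential radii machinery'' is not an argument. (There are also smaller issues --- as written, the condition ``$\|y-x\|<1/m$ and $K\cap V\subseteq\overline{B(y,1/m)}$'' places $x$ in the interior of the ball and is vacuous, and the openness of $S_m(K)$ in $\bd K$ is not obvious as stated --- but these are repairable; the density is not.) More fundamentally, an entirely new Baire-category construction is unnecessary here. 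The key fact you are missing is the constant-width duality for tangential radii,
\[
\bar\varrho_s(K,u,t)+\bar\varrho_i(K,-u,-t)=1,
\]
which converts ``all tangential radii at $u$ equal $1$'' directly into ``all tangential radii at $-u$ equal $0$''. Since the first statement gives a set of normal boundary points of full measure --- hence uncountable and dense --- at which the tangential radii at the normal equal $1$, and the antipodal map $\psi$ of a smooth strictly convex body is a homeomorphism of $\bd K$, the image of that set under $\psi$ is uncountable, dense, and consists precisely of points with all radii of curvature zero (using \eqref{3.19} once more to pass to sectional radii). This derives the second statement from the first in a few lines, with no new category argument and no approximation needed.
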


In the plane, the Reuleaux polygons of width $1$ are dense in the set of all convex bodies of constant width $1$, and they have the property that their radius of curvature is everywhere either $0$ or $1$. This makes them a useful tool for the proof of the mentioned Baire category type result in the plane. In higher dimensions, there is no similar construction known. Therefore, the proof of the preceding theorems requires a more elaborate approximation procedure. 

After collecting some notation and basic notions in the next section, we use Section \ref{sec3} to explain the tangential radii of curvature, which are fundamental for our investigation, and to provide some results pertaining to them which will be needed later. Section \ref{sec4} prepares the proof of an approximation theorem for bodies of constant width, which is crucial for the proof of the main results; this theorem is then proved in Section \ref{sec4.1}.
In Section \ref{sec5} we prove a more general version of Theorem \ref{T1}, involving tangential radii of curvature, from which Theorems \ref{T1} and \ref{T2} will then be deduced.

\section{Preliminaries}\label{sec2}

We work in ${\mathbb R}^n$ with scalar product $\langle\cdot,\cdot\rangle$ and induced norm $\|\cdot\|$. We write $B(z,r):=\{x\in{\mathbb R}^n: \|x-z\|\le r\}$ for the closed ball and ${\mathbb S}(z,r):=\{x\in{\mathbb R}^n: \|x-z\|= r\}$ for the sphere with centre $z$ and radius $r>0$. As usual, we write $B(o,1)=B^n$ and ${\mathbb S}(o,1)=\Sn$, where $o$ denotes the origin of $\R^n$.

The distance of a point $z$ from a set $A$ will be denoted by ${\rm dist}\,(z,A)$.

${\mathcal K}^n$ is the set of convex bodies (nonempty, compact, convex subsets) of ${\mathbb R}^n$. It is equipped with the Hausdorff metric, denoted by $\delta$. The support function of the convex body $K\in{\mathcal K}^n$ is defined by $h(K,u)=\max\{\langle x,u\rangle:x\in K\}$ for $u\in\R^n$. A convex body $K$ is {\em of constant width} $d>0$ if $h(K,u)+h(K,-u)=d$ for all $u\in \Sn$. We denote by ${\mathcal W}^n_1$ the set of convex bodies of constant width $1$ in ${\mathbb R}^n$. We recall that every convex body $K$ of constant width is strictly convex.

A {\em support element} of the convex body $K$ is a pair $(x,u)$ where $x$ is a boundary point of $K$ and $u$ is an outer unit normal vector to $K$ at $x$. The boundary point $x$ is {\em smooth} or {\em regular} if at $x$ there is only one outer unit normal vector. The vector $u\in \Sn$ is a {\em regular} normal vector of $K$ if it is a normal vector of $K$ at a unique boundary point of $K$, which is then denoted by $x_K(u)$. The support element $(x,u)$ is called {\em regular} if $x$ and $u$ are both regular. If a convex body $K$ is strictly convex, then each $u\in\Sn$ is a regular normal vector of $K$. The supporting hyperplane of $K$ with outer normal vector $u$ is denoted by $H(K,u)$, and $F(K,u)=H(K,u)\cap K$ is the corresponding support set. The $(n-1)$-dimensional subspace $u^\perp= T(K,u)$ parallel to $H(K,u)$ is called the {\em tangent space} of $K$ at $u$. If $x$ is a smooth boundary point of $K$ and $u$ is the unique outer unit normal vector of $K$ at $x$, then we write $T(K,u)=T_xK$.

We refer to \cite{Sch14} for notions from the theory of convex bodies that are not explained here.

We recall the notion of a `typical convex body'. The space $\K^n$ with the Hausdorff metric, being a complete metric space, is a Baire space. A subset of such a space is called {\em meagre} if it is a countable union of nowhere dense sets, and {\em comeagre} (or {\em residual}\hspace{2pt}) if its complement is meagre. The essential property of a Baire space is that the intersection of countably many comeagre subsets is still dense; therefore, comeagre subsets of a Baire space deserve the attribute of being `large'. One says that a {\em typical} $n$-dimensional convex body has a property $P$ (or that {\em most} bodies in $\K^n$ have property $P$) if the set of convex bodies having property $P$ is comeagre in $\K^n$. The same notions can be applied to the space of convex bodies in $\R^n$ of constant width $1$, since also this space, with the Hausdorff metric, is a complete metric space.

\section{Tangential radii of curvature}\label{sec3}

The defining relation for a convex body $K$ to be of constant width $1$ is the equation $h(K,u)+h(K,-u)=1$ for $u\in \Sn$. In order to be able to exploit this for curvature investigations, we need a curvature notion for convex bodies that depends linearly on the support function. Tangential radii of curvature are appropriate in this respect. We introduce them in this section, study their relation to the usual sectional curvatures, and prove some results that will be needed in the proofs of our theorems. Since there seems to be no convenient reference for tangential radii of curvature, we give a slightly extended introduction.

We begin with a general notion of curvature of a planar convex body and follow Jessen \cite{Jes29} in its introduction. Let $n=2$. Let $K\in{\mathcal K}^2$ be a two-dimensional convex body and let $(x,u)$ be a support element of $K$. Let $t\perp u$ be a unit vector. We denote by $R_{-u}$ and $R_t$ the closed rays with endpoint $x$ spanned by $-u$ and $t$, respectively, and define the open halfplane
$$ E:= \{x+\lambda u +\mu t: \lambda\in{\mathbb R},\,\mu>0\}.$$
We assume that the set $E\cap\bd K$ is not empty and also assume at first that $R_t \cap  E\cap\bd K=\emptyset$. Let $z\in E\cap{\rm bd}\,K$. Let $r(z)$ be the radius of the unique circle $C(z)$ with centre on $R_{-u}$ that passes through $x$ and $z$. Any limit circle (possibly degenerate) of a sequence of circles $C(z)$ for a sequence of points $z$ converging to $x$ is called an {\em osculating circle} of $K$ at $(x,u)$ in direction $t$. The radii of all osculating circles in direction $t$ fill a closed interval in $[0,\infty]$, which we denote by
$$ [\varrho_i(K,x,u,t),\varrho_s(K,x,u,t)].$$
If (what was excluded at first) there is a point $z\in  R_t\cap E\cap{\rm bd}\,K$, then the halfplane $\{y\in\R^2: \langle y-x,u\rangle\le 0\}$ is considered as a degenerate osculating circle of $K$ at $(x,u)$ in direction $t$, and we put $r(z)=\infty$ and $\varrho_i(K,x,u,t) = \varrho_s(K,x,u,t)=\infty$. Thus, in either case we have
$$ \varrho_i(K,x,u,t) =\liminf_{z\to x} r(z),\qquad \varrho_s(K,x,u,t) =\limsup_{z\to x} r(z).$$
The (extended real) numbers $\varrho_i(K,x,u,t),\varrho_s(K,x,u,t)$ are called, respectively, the \emph{lower radius of curvature} and the  \emph{upper radius of curvature}  of $K$ at $(x,u)$ in direction $t$. If $\varrho_i(K,x,u,t)=\varrho_s(K,x,u,t)$, then this (extended real) number is denoted by $\varrho(K,x,u,t)$ and called the \emph{radius of curvature} of $K$ at $(x,u)$ in direction $t$. If $K$ is clear from the context, we simply write $\varrho_i(x,u,t)$, $\varrho_s(x,u,t)$, $\varrho(x,u,t)$.

There is a second way of introducing radii of curvature, using normal vectors instead of boundary points. Again we assume that $E\cap{\rm bd}\,K\not=\emptyset$. Let $w$ be a unit vector with $\langle w,t\rangle>0$. At first, we assume that the support line $H(K,w)$ does not pass through $x$. Let $r(w)$ be the radius of the unique circle $C'(w)$ with centre on $R_{-u}$ that passes through $x$ and is tangent to $H(K,w)$. Any limit circle (possibly degenerate) of a sequence of circles $C'(w)$ for a sequence of vectors $w$ (with $\langle w,t\rangle>0$) converging to $u$ is called (for the moment) an {\em osculating circle of the second kind} of $K$ at $(x,u)$ in direction $t$. In contrast, we say that the previously defined osculating circles are {\em of the first kind}. If $x\in H(K,w)$ for some $w$ with $\langle w,t\rangle>0$, then $\{x\}$ is considered as a degenerate osculating circle of the second kind of $K$ at $(x,u)$ in direction $t$. There is, however, no difference between osculating circles of the first and the second kind. Consider, for example, a circle $C'(w)$ as just constructed. Then $H(K,w)$ is a common support line of $K$ and $C'(w)$. Let $p$ be a point where $H(K,w)$ touches $K$ and let $q$ be a point where $H(K,w)$ touches $C'(w)$. Then at least one of the following occurs: the boundary arc of $K$ between $p$ and $x$ has a point $z$ in common with $C'(w)$, or the arc of $C'(w)$ between $q$ and $x$ has a point $z$ in common with the boundary of $K$. Therefore, the circle $C'(w)$ coincides with a circle $C(z)$ constructed in the first procedure. If a sequence of vectors $w$ converges to $u$, the corresponding sequence of points $z$ converges to $x$ or to a point of $R_t\cap E\cap\bd K$. We conclude that every osculating circle of the second kind is also one of the first kind.  A similar argument (roughly, interchanging the roles of boundary points and support lines) shows that every osculating circle of the first kind is also one of the second kind. Thus, we need not distinguish between the two kinds, and the lower and upper radii of curvature can be defined in either of the two ways. 

We still assume that $(x,u)$ is a support element of $K$. If $x$ is a smooth boundary point of $K$, then the outer unit normal vector $u$ of $K$ at $x$ is uniquely determined, and we briefly write $\varrho_i(x,t)$, $\varrho_s(x,t)$, $\varrho(x,t)$ for the introduced numbers. The reciprocal values (possibly $\infty$)
$$\kappa_{i}(x,t):= 1/\varrho_{s}(x,t), \qquad \kappa_{s}(x,t):=1/\varrho_{i}(x,t),  \qquad \kappa(x,t):=1/\varrho(x,t)$$
are, respectively, the \emph{lower curvature}, \emph{upper curvature} and, if it exists, the \emph{curvature} of $K$ at $x$ in direction $t$. If $u$ is a regular normal vector of $K$, then the point $x$ in $F(K,u)$ is uniquely determined, and we write $\varrho_i(u,t)$, $\varrho_s(u,t)$, $\varrho(u,t)$.

At regular boundary points or normal vectors, the radii of curvature can be expressed in terms of functions locally representing the convex body. Let $K\in{\mathcal K}^2$ and a support element $(x,u)$ of $K$ be given, and assume that $x$ is a smooth boundary point of $K$. Let $t\perp u$ be a unit vector. We can choose a number $\varepsilon>0$ and a neighbourhood $U$ of $x$ such that $U\cap\bd K$ can be described in the form
\begin{equation}\label{2.5.1}
U\cap\bd K=\big\{z+\tau t-f(\tau)u:\tau\in(-\varepsilon,\varepsilon)\big\},
\end{equation}
where $f:(-\varepsilon,\varepsilon)\to {\mathbb R}$ is a convex function which satisfies $f\ge 0$, $f(0)=0$ and $f'(0)=0$. An elementary calculation of the radii of the circles $C(z)$ used in the definition of osculating circles then gives
\begin{equation}\label{2.2}
\varrho_i(K,x,t) =\liminf_{\tau\downarrow 0} \frac{\tau^2}{2f(\tau)},\qquad \varrho_s(K,x,t) =\limsup_{\tau\downarrow 0} \frac{\tau^2}{2f(\tau)}.
\end{equation}

\vspace{2mm}

Instead of using a local representation (\ref{2.5.1}) of ${\rm bd}\, K$, we can also represent $K$ by its support function $h(K,\cdot)$. Then, under the assumption that $u$ is a regular normal vector of $K$ and $x=x_K(u)$, the second approach above (for introducing radii of curvature), together with an elementary calculation, shows that, for a unit vector $t\perp u$,
\begin{equation}\label{2.2.a} 
\varrho_i(K,u,t) =\liminf_{w\to u,\, \langle w,t\rangle>0} \frac{h(K,w)-\langle x,w\rangle}{1-\langle u,w\rangle},
\end{equation}
\begin{equation}\label{2.2.b} 
\varrho_s(K,u,t) =\limsup_{w\to u,\, \langle w,t\rangle>0} \frac{h(K,w)-\langle x,w\rangle}{1-\langle u,w\rangle}.
\end{equation}

\vspace{2mm}

Now we turn to general dimensions $n\ge 2$. Let $K\in{\mathcal K}^n$ be an $n$-dimensional convex body, let $(x,u)$ be a support element of $K$ and let $t\perp u$ be a unit vector. The notion of curvature of $K$ at $(x,u)$ in direction $t$ can be reduced to the two-dimensional case, either by section or by projection. We consider first the case of sections, which leads to sectional curvatures. 

For this, we define the two-dimensional subspace
$$ L_{x,u,t} := x+{\rm lin}\{u,t\} $$
and the two-dimensional convex body
$$ K_{x,u,t}:=K \cap L_{x,u,t}$$
and then simply define
$$ \varrho_i(K,x,t):= \varrho_i(K_{x,u,t},x,u,t),$$
and similarly for $\varrho_s$, $\varrho$ and the curvatures. We shall talk of {\em sectional radii of curvature} and {\em sectional curvatures}, but often delete `sectional' if there is no danger of ambiguity. The convention about dropping the argument $K$ is as in the two-dimensional case.

Sectional curvatures at smooth boundary points can be interpreted with the aid of the curvature indicatrix, which we now introduce. Let $x$ be a smooth boundary point of $K$ and $u$ the unique outer normal vector of $K$ at $x$. For  $\eta>0$ we consider the set
\begin{equation}\label{2.5.3d} 
{\mathcal D}_x(K,\eta):= (2\eta)^{-1/2}  \left\{[K \cap (H(K,u)-\eta u)]+\eta u-x\right\},
\end{equation}
which lies in the tangent space $T_{\hspace*{-1pt}x}K$ and contains $o$ if $\eta$ is sufficiently small (what we assume in the following). It is a suitable homothet of the intersection of $K$ with a hyperplane parallel to the supporting hyperplane at $x$ and at distance $\eta$ from it.
If the radial limit 
\begin{equation}\label{2.5.3c} 
\text{r-}\lim_{\eta\downarrow 0} {\mathcal D}_x(K,\eta) ={\mathcal D}_x(K) 
\end{equation}
exists, it is called the {\em curvature indicatrix}, or briefly the {\em indicatrix}, of $K$ at $x$. Here (\ref{2.5.3c}) means that ${\mathcal D}_x(K)$ is a closed set in $T_xK$ containing $o$ and 
$$ \lim_{\eta\downarrow 0} \rho({\mathcal D}_x(K,\eta),t)= \rho({\mathcal D}_x(K),t)\qquad\mbox{for } t\in T_{\hspace*{-1pt}x}K\setminus\{o\}$$ 
where $\rho$ denotes the radial function (as defined in \cite[p. 57]{Sch14}). The set ${\mathcal D}_x(K)$ is then necessarily convex.

Let $t\in T_xK\cap {\mathbb S}^{n-1}$. If we represent $L_{x,u,t}\cap{\rm bd}\,K$ in a neighbourhood of $x$ in the form  (\ref{2.5.1}), then the number $\tau>0$ for which $f(\tau)=\eta$ satisfies
$$ \frac{\tau}{\sqrt{2\eta}} =\rho({\mathcal D}_x(K,\eta),t),$$
hence
(\ref{2.2}) gives
\begin{equation}\label{9a} 
\varrho_i(x,t)=\liminf_{\eta\downarrow 0} \rho^2({\mathcal D}_x(K,\eta),t),\qquad \varrho_s(x,t)= \limsup_{\eta\downarrow 0} \rho^2({\mathcal D}_x(K,\eta),t).
\end{equation}
Therefore, if the radius of curvature in direction $t$ exists, then
\begin{equation}\label{2.5.3a} 
\varrho(x,t)=\lim_{\eta\downarrow 0} \rho^2({\mathcal D}_x(K,\eta),t),
\end{equation}
and if it exists for all $t$, then
\begin{equation}\label{2.80a}  
\varrho(x,t) = \rho^2({\mathcal D}_x(K),t) \qquad\mbox{for }t\in T_{\hspace*{-1pt}x} K \cap {\mathbb S}^{n-1}.
\end{equation}

\vspace{1mm}

A second kind of radii of curvature, called {\em tangential}, can be introduced by using projections. Let $K\in{\mathcal K}^n$. We assume now that $u$ is a regular unit normal vector of $K$, and we write $x_K(u)=x$. We consider the two-dimensional convex body
$$\overline K_{x,u,t}:= K|L_{x,u,t}.$$
where $|$ denotes orthogonal projection, and then define 
$$ \bar\varrho_i(K,u,t) := \varrho_i(\overline K_{x,u,t},x,u,t),$$
and similarly $\bar\varrho_s$ and $\bar\varrho$ are defined. We call these the (lower, etc.) {\em tangential radii of curvature} of $K$ at $u$ in direction $t$. Again, we drop the argument $K$ if there is no danger of ambiguity.

For later use, we observe that from $K\cap L_{x,u,t}\subset K|L_{x,u,t}$ it follows easily that at regular support elements $(x,u)$ of $K$ we have
\begin{equation}\label{3.19}
\varrho_s(K,x,t) \le \bar\varrho_s(K,u,t).
\end{equation}

We represent $K|L_{x,u,t}$ in a neighbourhood of $x$ in the form (\ref{2.5.1}). Let $\tau>0$ be the number for which $f(\tau)=\eta$. The point $\bar y:= x+\tau t-\eta u$ is the image of a point $y\in{\rm bd}\,K$ under the orthogonal projection, $\Pi$, from $\R^n$ to $L_{x,u,t}$. The $(n-2)$-plane $\Pi^{-1}(\bar y)$ lies in the $(n-1)$-dimensional space $H(K,u)-\eta u$ and is, in this space, a supporting plane of $K\cap (H(K,u)-\eta u)$ at the point $y$. Therefore, the distance of $\bar y$ from the line $x+\R u$, which is $\tau$, is equal to the value of the support function of $[K\cap(H(K,u)-\eta u)]+\eta u-x$ at $t$. According to (\ref{2.5.3d}), this yields
$$ \frac{\tau}{\sqrt{2\eta}} = h({\mathcal D}_x(K,\eta),t)$$
(recall that $h$ denotes the support function). Consequently, (\ref{2.2}) gives  
\begin{equation}\label{11a}
\bar\varrho_i(u,t)=\liminf_{\eta\downarrow 0} h^2({\mathcal D}_x(K,\eta),t),\qquad \bar\varrho_s(u,t)= \limsup_{\eta\downarrow 0} h^2({\mathcal D}_x(K,\eta),t).
\end{equation}
Hence, if the tangential radius of curvature in direction $t$ exists, then
\begin{equation}\label{2.5.3b} 
\bar\varrho(u,t)=\lim_{\eta\downarrow 0} h^2({\mathcal D}_x(K,\eta),t).
\end{equation}
If the tangential radius of curvature and thus the limit (\ref{2.5.3b}) exists for all $t$, then there is a closed convex set $\overline{\mathcal D}_u(K)\subset u^\perp$ with 
\begin{equation}\label{2.79d}
\text{s-}\lim_{\eta\downarrow 0}{\mathcal D}_x(K,\eta)=\overline{\mathcal D}_u(K)
\end{equation} 
(this convergence is defined by pointwise convergence of support functions)
and thus
\begin{equation}\label{2.81a}  
\bar\varrho(u,t) = h^2(\overline{\mathcal D}_u(K),t) \qquad\mbox{for }t\in u^\perp \cap {\mathbb S}^{n-1}.
\end{equation}
We call $\overline{\mathcal D}_u(K)$ the {\em tangential curvature indicatrix} of $K$ at $u$. In general, one cannot deduce the existence of $\overline{\mathcal D}_u(K)$ from the existence of ${\mathcal D}_x(K)$, nor vice versa, but the following holds.

\begin{Lemma}\label{L0} Let $(x,u)$ be a regular support element of $K\in{\mathcal K}^n$. Then the following conditions $\rm (a)$ and  $\rm (b)$ are equivalent.\\[1mm]
$\rm (a)$ The sectional radii of curvature of $K$ at $x$ exist and are positive and finite.\\[1mm]
$\rm (b)$ The tangential radii of curvature of $K$ at $u$ exist and are positive and finite.\\[1mm]
If $(a)$ or $(b)$ holds, then ${\mathcal D}_x(K)=\overline{\mathcal D}_u(K)$.
\end{Lemma}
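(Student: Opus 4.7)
The plan is to reduce both conditions to statements about the single family of convex sets $D_\eta := \mathcal{D}_x(K,\eta) \subset u^\perp$, $\eta > 0$ small, and to use the interplay between pointwise convergence of radial and of support functions for convex bodies with $o$ in the interior. For $\eta$ small, $D_\eta$ is convex and, since $x - \eta u$ lies in the relative interior of the slice $K \cap (H(K,u) - \eta u)$ (using regularity of $(x,u)$), $o$ lies in the $(n-1)$-dimensional interior of $D_\eta$ in $u^\perp$. Combining (\ref{9a}) with (\ref{2.5.3a}), condition $\mathrm{(a)}$ is equivalent to pointwise convergence $\rho(D_\eta, t) \to \sigma(t) \in (0,\infty)$ for every $t \in u^\perp \cap \Sn$; combining (\ref{11a}) with (\ref{2.5.3b}), condition $\mathrm{(b)}$ is equivalent to pointwise convergence $h(D_\eta, t) \to \tau(t) \in (0,\infty)$ for every such $t$.

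For $\mathrm{(b)} \Rightarrow \mathrm{(a)}$, the support functions $h(D_\eta, \cdot)$ are sublinear on $u^\perp$, so their pointwise limit $\tau$, extended by $1$-homogeneity, is sublinear and hence the support function of a unique closed convex set $\overline{\mathcal{D}}_u(K) \subset u^\perp$. Finiteness of $\tau$ makes this set bounded; positivity of $\tau$ places $o$ in its interior, since otherwise a supporting hyperplane at $o$ would yield a direction with $\tau \le 0$. Pointwise convergence of convex functions on an open convex set is automatically locally uniform, so the convergence $h(D_\eta, \cdot) \to h(\overline{\mathcal{D}}_u(K), \cdot)$ is uniform on $u^\perp \cap \Sn$, which is equivalent to Hausdorff convergence $D_\eta \to \overline{\mathcal{D}}_u(K)$. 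Because $o$ lies in the interior of the limit, Hausdorff convergence forces pointwise convergence of the radial functions on $u^\perp \cap \Sn$, delivering $\mathrm{(a)}$ with $\mathcal{D}_x(K) = \overline{\mathcal{D}}_u(K)$.

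For $\mathrm{(a)} \Rightarrow \mathrm{(b)}$, I would pass to polar bodies. Since $o \in \inn D_\eta$, the polar $D_\eta^\circ \subset u^\perp$ is well-defined and satisfies $h(D_\eta^\circ, t) = 1/\rho(D_\eta, t)$ on $u^\perp \cap \Sn$. Hypothesis $\mathrm{(a)}$ therefore yields pointwise convergence $h(D_\eta^\circ, \cdot) \to 1/\sigma$ to a positive, finite limit, i.e.\ the setting of $\mathrm{(b)}$ for the polar family. The argument of the previous paragraph then produces a convex body $D^\circ$ with $o$ in its interior such that $D_\eta^\circ \to D^\circ$ in Hausdorff metric. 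Continuity of polarity on convex bodies containing $o$ as interior point gives $D_\eta \to (D^\circ)^\circ$ in Hausdorff metric, so in particular $h(D_\eta, \cdot)$ converges pointwise to a positive, finite limit; this is $\mathrm{(b)}$, and both limit sets are identified with $(D^\circ)^\circ$, hence with each other.

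The decisive technical input, and hence the main obstacle, is the passage from pointwise convergence of radial or support functions to Hausdorff convergence. On the support-function side this is handled by the automatic local uniform convergence of pointwise limits of convex functions, together with positivity of $\tau$ to ensure the limit has $o$ in the interior. On the radial-function side no such direct tool is available, which is why the polarity detour is essential; that detour in turn relies on the regularity of $(x,u)$, which is what puts $o$ in the interior of every $D_\eta$.
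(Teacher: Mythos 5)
Your proof is correct and follows essentially the same approach as the paper: the paper reduces Lemma \ref{L0} directly to Lemma \ref{L7.1} in the Appendix (the equivalence of $\text{s-}\lim$ and $\text{r-}\lim$ for convex bodies with $o$ in the interior), whose proof likewise combines the Hausdorff convergence obtained from pointwise convergence of support functions with the continuity of polarity. You have merely unpacked the content of Lemma \ref{L7.1} in a self-contained way inside the proof, making explicit the construction of the limiting convex body from the sublinear limit of the support functions.
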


\noindent{\em Proof.} By (\ref{9a}) and (\ref{2.80a}), property (a) holds if and only if $\lim_{\eta\downarrow 0} \rho({\mathcal D}_x(K,\eta),\cdot)$ exists pointwise and is positive and bounded. Similarly, by (\ref{11a}) and (\ref{2.81a}), property (b) holds if and only if  $\lim_{\eta\downarrow 0} h({\mathcal D}_x(K,\eta),\cdot)$ exists pointwise and is positive and bounded. Therefore, the assertion of the lemma follows from Lemma \ref{L7.1} in the Appendix. \qed

We shall need some second-order differentiability properties of the boundaries of convex bodies, which in dimension $n=3$ were established by Busemann and Feller \cite{BF36} and for general $n$ by Aleksandrov \cite{Ale39}. A convex function $f:D\to\R$, where $D\subset \R^n$ is open and convex, is said to have a strong second-order Taylor expansion at $x\in D$ if $f$ is differentiable at $x$ and there exists a symmetric linear map $A:\R^n\to\R^n$ such that
$$ f(y)= f(x)+\ \langle \nabla f(x),y-x\rangle+\frac{1}{2}\langle A(y-x),y-x\rangle +o(\|y-x\|^2)$$
for $y\in D$ and $y\to x$ (where $\nabla$ denotes the gradient). If this holds, then $A$ is uniquely determined, and $f$ is said to be twice differentiable at $x$. The theorem of Busemann--Feller--Aleksandrov says that $f$ is twice differentiable at $\Hc^n$-almost every point in $D$. For a modern proof, we refer to Borwein and Vanderwerff \cite[Thm. 2.6.4]{BV10}.

Let $K\in\K^n$ be a convex body with interior points and let $x\in\bd K$. The point $x$ is said to be a {\em normal point} of $K$ if at $x$ all sectional curvatures exist (in ${\mathbb R}$) and the function $q$ defined by 
\begin{equation}\label{3.25}
q(t):=\rho^{-2}({\mathcal D}_x(K),t)=\kappa(K,x,t), \qquad t\in T_xK\cap\Sn,
\end{equation}
(see (\ref{2.80a}) for the last equality) and homogeneous extension of degree two is a quadratic form. By applying the Busemann--Feller--Aleksandrov theorem to convex functions locally representing the boundary of $K$, it can be deduced that $\Hc^{n-1}$-almost all boundary points of $K$ are normal (Aleksandrov \cite{Ale39}, see also Busemann \cite{Bus58}). If (\ref{3.25}) holds with a quadratic form $q$, then (by transforming $q$ to principal axes) there is an orthornormal basis $(e_1,\dots,e_{n-1})$ of $T_xK$ and there are numbers $k_1,\dots,k_{n-1}$ such that
\begin{equation}\label{3.26}
\kappa(K,x,t) =k_1\langle t,e_1\rangle^2+\dots+k_{n-1}\langle t,e_{n-1}\rangle^2 \qquad\text{for }t\in T_xK\cap\Sn.
\end{equation}
This is known as (the generalized version of) Euler's formula. The numbers $k_1,\dots,k_{n-1}\ge 0$ are called the principal curvatures of $K$ at $x$.

Similarly, a unit vector $u$ is called an {\em ordinary normal vector} of $K$ if all tangential radii of curvature of $K$ at $u$ exist (in ${\mathbb R}$) and the function $\bar q$ defined by
\begin{equation}\label{3.27}
\bar q(t):=h^2(\overline{\mathcal D}_u(K),t)=\bar\varrho(K,u,t), \qquad t\in u^\perp\cap\Sn,
\end{equation}
(see (\ref{2.81a}) for the last equality) and homogeneous extension of degree two is a quadratic form. By applying the theorem on the twice differentiability of convex functions to the support function, Aleksandrov \cite{Ale39} has deduced (though not with this terminology) that $\Hc^{n-1}$-almost all $u\in\Sn$ are ordinary normal vectors of $K$. If (\ref{3.27}) holds with a quadratic form $\bar q$, then there is an orthornormal basis $(e_1,\dots,e_{n-1})$ of $u^\perp$ and there are numbers $r_1,\dots,r_{n-1}$ such that
\begin{equation}\label{3.28}
\bar\varrho(K,u,t) =r_1\langle t,e_1\rangle^2+\dots+r_{n-1}\langle t,e_{n-1}\rangle^2 \qquad\text{for }t\in u^\perp\cap\Sn.
\end{equation}
This is sometimes called Blaschke's formula, since a special case appears in \cite[\S24, III]{Bla56}. The numbers $r_1,\dots,r_{n-1}\ge 0$ are called the principal tangential radii of curvature of $K$ at $u$.

Suppose now that $(x,u)$ is a regular support element of $K$, $u$ is an ordinary normal vector of $K$ and (\ref{3.28}) holds with $r_1,\dots,r_{n-1}>0$. Then it follows from Lemma \ref{L0} that $x$ is a normal point of $K$ and that
${\mathcal D}_x(K)=\overline{\mathcal D}_u(K)$. In particular, we can choose $(e_1,\dots,e_{n-1})$ as the same orthonormal basis in (\ref{3.26}) and (\ref{3.28}). The curvature indicatrix ${\mathcal D}_x(K)$ is an ellipsoid, and $\pm e_1,\dots,\pm e_{n-1}$ are the directions of its principal axes. Therefore, $\rho({\mathcal D}_x(K),e_i)=h({\mathcal D}_x(K),e_i)$ and hence $r_i=1/k_i$ for $i=1,\dots,n-1$. Thus, the principal tangential radii of curvature at $u$ are equal to the principal sectional radii of curvature, that is, the reciprocal sectional curvatures, at $x$ (but each tangential radius of curvature in a non-principal direction is strictly larger than the sectional radius of curvature in the same direction).

\section{An approximation result for bodies of constant width}\label{sec4}

In this section, we formulate an approximation result for bodies of constant width. It serves as a higher-dimensional substitute for the denseness of the set of Reuleaux polygons in the set of planar convex bodies of constant width. The particular curvature properties that we need are only obtained on part of the boundary, but even this restricted approximation result will be sufficient for our purpose.

For $z\in {\mathbb S}^{n-1}$ we define the open halfsphere
\begin{equation}\label{C0} 
S(z) :=  \{u\in {\mathbb S}^{n-1}: \langle u,z\rangle >0\}
\end{equation}
(to be used later) and the closed spherical cap
\begin{equation}\label{C1}
C(z):= \{u\in {\mathbb S}^{n-1}: \langle u,z\rangle \ge 1/2\}
\end{equation}
(the number $1/2$ could be replaced by any positive number less than $1$).

\begin{Theorem}\label{T4}
Let $W_0\in{\mathcal W}^n_1$ and $v\in {\mathbb S}^{n-1}$ be given, let $\varepsilon>0$. Then there exists a convex body $Q\in{\mathcal W}^n_1$ with 
$$ \delta(W_0,Q)\le \varepsilon$$
and having the following property. For each $u\in C(v)$, either
\begin{equation}\label{3.2} 
\bar\varrho(Q,u,t) =1 \quad \mbox{for all } t\in u^\perp\cap \Sn
\end{equation}
or
\begin{equation}\label{3.1} 
\bar\varrho(Q,u,t) =0 \quad  \mbox{for at least one } t\in u^\perp\cap \Sn.
\end{equation}
\end{Theorem}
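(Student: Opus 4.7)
My plan is to build $Q$ so that, over the cap $C(v)$, its support function is locally of the form $w\mapsto\langle p_i,w\rangle+1$ for finitely many centres $p_1,\dots,p_m$, making $Q$ locally coincide with a unit ball $B(p_i,1)$. Such a structure forces the dichotomy \eqref{3.2}/\eqref{3.1} on $C(v)$: at $u\in C(v)$ whose support point $x_Q(u)$ lies on a single spherical piece $\mathbb S(p_i,1)$ the body locally equals $B(p_i,1)$ and $\bar\varrho(Q,u,t)=1$ for all $t\in u^\perp\cap\Sn$; at $u$ whose support point lies on two or more pieces meeting along a ridge, the two-dimensional projection of $Q$ onto $L_{x_Q(u),u,t}$ has a corner in the direction $t$ tangent to the ridge, yielding $\bar\varrho(Q,u,t)=0$.

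\smallskip

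\textbf{Construction.} Choose a $\delta$-net $u_1,\dots,u_m$ in $C(v)$ with $\delta$ small (to be fixed later in terms of $\varepsilon$) and set $p_i:=x_{W_0}(u_i)-u_i$. Strict convexity of $W_0$ together with the constant-width identity $h_{W_0}(u)+h_{W_0}(-u)=1$ gives $p_i=x_{W_0}(-u_i)\in\bd W_0$ and $\|p_i-p_j\|\le\operatorname{diam}W_0=1$. Form the auxiliary body
\[
Q^+ := \bigcap_{i=1}^m B(p_i,1).
\]
Since each $p_i\in W_0$ and $W_0$ has diameter $1$, one has $W_0\subset Q^+$, and $Q^+$ is strictly convex as an intersection of strictly convex bodies, so every $u\in\Sn$ is a regular normal vector of $Q^+$. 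Over $C(v)$ the boundary of $Q^+$ is a finite union of pieces of the unit spheres $\mathbb S(p_i,1)$ meeting transversally along lower-dimensional ridges, so $Q^+$ already realises the desired dichotomy on $C(v)$. The identity $h_{Q^+}(u_i)=\langle p_i,u_i\rangle+1=h_{W_0}(u_i)$, combined with uniform Lipschitz continuity of support functions of constant-width-$1$ bodies, shows that $h_{Q^+}\to h_{W_0}$ uniformly on $C(v)$ as the net is refined.

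\smallskip

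\textbf{Restoring constant width.} The difficulty is that $Q^+\supset W_0$ typically has diameter strictly greater than $1$, so $Q^+\notin\mathcal W^n_1$. I would define $Q$ by keeping the spherical-piece structure of $Q^+$ over $C(v)$ intact while modifying the support function on the antipodal cap: declare $h_Q:=h_{Q^+}$ on $C(v)$ and $h_Q(u):=1-h_{Q^+}(-u)$ on $-C(v)$, and then extend $h_Q$ across the equatorial band $\{u:|\langle u,v\rangle|<1/2\}$ in such a way that the resulting function is a sublinear support function of a convex body of constant width $1$. Since the modification occurs outside $C(v)$, the piecewise-spherical structure of $\bd Q^+$ on $C(v)$ is inherited by $Q$ and the dichotomy is preserved, while the Hausdorff bound $\delta(Q,W_0)\le\varepsilon$ follows by taking $\delta$ small enough.

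\smallskip

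\textbf{Main obstacle.} The delicate point is the ``constant-width extension'' step: producing a valid sublinear $h_Q$ with $h_Q(u)+h_Q(-u)=1$ that agrees with the prescribed data on (a neighbourhood of) each $u_i$ and its antipode and remains Hausdorff-close to $h_{W_0}$. In two dimensions this is replaced by the density of Reuleaux polygons, but in higher dimensions no such global construction is available, and the extension must be carried out within the linear subspace of support functions satisfying the constant-width identity. Showing that such an extension exists and can be chosen to stay within Hausdorff distance $\varepsilon$ of $W_0$ is the principal technical content of the paper and is prepared in Section~\ref{sec4} (and then executed in Section~\ref{sec4.1}).
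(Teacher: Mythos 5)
Your high-level strategy is the right one: make the boundary over $C(v)$ a finite union of pieces of unit spheres centred at points in the body, so that every $u\in C(v)$ either sees a single spherical piece ($\bar\varrho\equiv1$) or a ridge ($\bar\varrho=0$ in some direction). Your choice of centres $p_i=x_{W_0}(u_i)-u_i=x_{W_0}(-u_i)$ and the body $Q^+=\bigcap_i B(p_i,1)\supset W_0$ is also a natural first step, and the identity $h_{Q^+}(u_i)=h_{W_0}(u_i)$ is correct. (A small slip: the direction $t$ with $\bar\varrho(Q,u,t)=0$ at a ridge point is one pointing \emph{across} the ridge, i.e.\ such that a second normal $x-z_2$ lies in $\mathrm{pos}\{u,t\}$; a direction tangent to the ridge would generically have nonzero radius.)

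The genuine gap is exactly where you flag it, but it is not a mere technicality you can defer: the ``restoring constant width'' step is the entire theorem, and the route you sketch for it is not the paper's and is not obviously viable. Prescribing $h_Q=h_{Q^+}$ on $C(v)$, $h_Q(u)=1-h_{Q^+}(-u)$ on $-C(v)$, and asking for a sublinear extension across the equatorial band satisfying $h_Q(u)+h_Q(-u)=1$ is a constrained extension problem with no guarantee of feasibility: sublinearity couples values at antipodal and nearby directions, and the prescribed boundary data on the two caps need not admit any convex interpolant, let alone one within $\varepsilon$ of $h_{W_0}$. You have in effect re-posed the original difficulty rather than resolved it. The paper avoids this entirely by a different mechanism: it first replaces $W_0$ by the smoother $W=(1-\sigma)W_0+\tfrac\sigma2 B^n$, then forms $K:=\mathrm{conv}\bigl(M\cup\tau(W,\mathrm{cl}\,S(v))\bigr)$ with $M$ a finite subset of the \emph{lower} boundary (so $K$ is of diameter $1$ but not of constant width), and then applies the explicit generalized B\"uckner completion $\beta$ of Lemma~\ref{L2} to produce $Q=\beta(K)\in\mathcal W^n_1$. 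That $Q$ is close to $W_0$ comes from the Lipschitz estimate in Lemma~\ref{L2}, and — crucially — the inclusion $\tau(\beta_v(K),C(v))\subset\bd\beta(K)$ together with estimate~(\ref{B6}) (which needs the ball summand of $W$) shows that the boundary of $Q$ over $C(v)$ is indeed $\bigcup_{z\in M}\mathbb S(z,1)$. Your proposal contains none of these ingredients, so as written it proves the easy part of the dichotomy for the auxiliary body $Q^+$ but does not produce a body of constant width $1$ with the required structure.
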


The proof of this theorem requires a number of preparations.

Let $K\in{\mathcal K}^n$ be a convex body. We denote by $d_K={\rm diam}\,K$ its diameter and by $r_K$ its inradius. Any closed segment $[x,y]\subset K$ with $\|x-y\|=d_K$ is called a {\em diameter segment} of $K$. A boundary point $x$ of $K$ is a {\em diameter endpoint} if it is an endpoint of a diameter segment of $K$. If each boundary point of $K$ is a diameter endpoint, then $K$ is of constant width $d_K$; this is well known and easy to see. 

A convex body $K'$ is called a {\em tight cover} of $K$ if it contains $K$ and has the same diameter as $K$.

According to \cite[(7.3)]{CG83}, every convex body $W\in{\mathcal W}^n_1$ contains a ball of radius $c_n>0$, where $c_n$ is an explicit constant that depends only on the dimension. We define
$${\mathcal K}^\bullet:= \{K\in{\mathcal K}^n: d_K=1, \, r_K\ge c_n\}.$$

Let $K\in{\mathcal K}^n$ and $\omega\subset \Sn$. The set $\tau(K,\omega)$ is the reverse spherical image of $K$ at $\omega$ (see \cite[p. 88]{Sch14}), that is, the set of points $x\in{\rm bd}\,K$ at which there exists an outer normal vector falling in $\omega$. If $K$ is strictly convex, then $\tau(K,\omega)= \{x_K(u):u\in\omega\}$. Let $v\in {\mathbb S}^{n-1}$. Then $\tau(K,S(v))$ is the set of points $x\in{\rm bd}\,K$ at which there exists an outer normal vector $u$ with $\langle u,v\rangle>0$. We call $\tau(K,S(v))$ the {\em upper boundary} of $K$, with respect to $v$; correspondingly, $\tau(K,S(-v))$ is the {\em lower boundary} of $K$.

In addition, let $L\in {\mathcal K}^n$ and $K\subset L$. A point $x\in{\rm bd}\,L$ is called $(K,v)${\em -directed} if $x+\lambda v\notin L$ for all $\lambda>0$ and $x-\lambda v\in{\rm int}\,K$ for some $\lambda>0$. The body $L$ is called $(K,v)${\em -complete} if each of its $(K,v)$-directed boundary points is a diameter endpoint of $L$. 

Further, we define
$$ Z^+(K,v) := \{x+\lambda v:x\in K,\,\lambda\ge 0\},$$
which is the cylinder above $K$ in direction $v$, and
$$ \beta_v(K):=Z^+(K,v) \cap \bigcap_{x\in K} B(x,d_K).$$
The following was proved in \cite{MS13} (Lemma 4; the proof holds for general convex bodies $K$). 

\begin{Lemma}\label{L1} 
Let $K\in {\mathcal K}^n$ and $v, w\in {\mathbb S}^{n-1}$. Then $\beta_v(K)$ is a tight cover of $K$, and it is $(K,v)$-complete.

Moreover, the body $\beta_w(\beta_v(K))$ is $(K,v)$-complete and $(K,w)$-complete. Every diameter endpoint of $\beta_v(K)$ is a diameter endpoint of $\beta_w(\beta_v(K))$.
\end{Lemma}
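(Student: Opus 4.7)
The plan is to prove the lemma's four assertions in sequence, each reducing to a short case analysis once the right reduction is in place.

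\smallskip
\noindent(i) \emph{Tight cover.} The inclusion $K\subset\beta_v(K)$ is immediate. To show $d_{\beta_v(K)}\le d_K$, I project onto $v^\perp$: for $p,q\in\beta_v(K)$ with $v$-coordinates $s_p\ge s_q$, pick $x_q\in K$ as a point whose projection equals that of $q$ and whose $v$-coordinate $a_q$ is minimal. Then $s_q\ge a_q$ (since $q\in Z^+(K,v)$), so $(s_p-s_q)^2\le(s_p-a_q)^2$, and combining with the ball condition $\|p-x_q\|^2\le d_K^2$ and the orthogonal decomposition of both squared distances gives $\|p-q\|\le d_K$.

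\smallskip
\noindent(ii) \emph{$(K,v)$-completeness of $\beta_v(K)$.} Let $p\in\bd\beta_v(K)$ be $(K,v)$-directed. The condition $p-\lambda v\in\inn K$ for some $\lambda>0$ is incompatible with $p$ lying on the cylindrical (lateral) side of $Z^+(K,v)$ (where the line through $p$ parallel to $v$ is tangent to $K$, hence avoids $\inn K$) or on the lower side (where $p-\epsilon v\notin K$ already for small $\epsilon>0$). Hence $p\in\bd\bigcap_{x\in K}B(x,d_K)$, so $\|p-x_0\|=d_K=d_{\beta_v(K)}$ for some $x_0\in K\subset\beta_v(K)$, making $p$ a diameter endpoint.

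\smallskip
\noindent(iii) \emph{Preservation and $(K,w)$-completeness.} Write $L=\beta_v(K)$ and $L'=\beta_w(L)$. Applying (i) to $L$ with direction $w$ yields $L\subset L'$ and $d_L=d_{L'}=d_K$, so every diameter segment of $L$ is a diameter segment of $L'$, which is the final "moreover" statement. For the $(K,w)$-completeness of $L'$, note that since $K\subset L$, every $(K,w)$-directed boundary point of $L'$ is automatically $(L,w)$-directed; and (ii), applied to the pair $(L,w)$, says precisely that $L'$ is $(L,w)$-complete.

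\smallskip
\noindent(iv) \emph{$(K,v)$-completeness of $L'$.} The key is a descent observation: \emph{any $p\in L'$ with $p-\lambda_0v\in\inn K$ for some $\lambda_0>0$ already lies in $L$}. Indeed, $x_0:=p-\lambda_0v\in K$ gives $p=x_0+\lambda_0v\in Z^+(K,v)$; and since $K\subset L'$ with $d_{L'}=d_K$, we have $\|p-x\|\le d_K$ for every $x\in K$, so $p\in\beta_v(K)=L$. Now if $p\in\bd L'$ is $(K,v)$-directed, descent forces $p\in L$, and $p\notin\inn L\subset\inn L'$ gives $p\in\bd L$; the condition $p+\lambda v\notin L'\supset L$ shows $p$ is $(K,v)$-directed in $L$, so by (ii) $p$ is a diameter endpoint of $L$, hence of $L'$ by (iii). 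The a priori obstacle here was that the new portion of $\bd L'$ introduced by $\beta_w$ carries no immediate connection to $v$; the descent observation supplies exactly the bridge needed to pull the problem back to the planar-style argument of (ii).
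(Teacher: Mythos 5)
Your proof is correct and essentially self-contained. Since the paper itself defers the bulk of this lemma to Moreno--Schneider \cite{MS13} (Lemma 4) and only supplies a one-sentence justification for the final ``moreover'' statement --- namely that $\beta_w(\beta_v(K))$ is a tight cover of $\beta_v(K)$ --- the only part you can be compared against directly is (iii), and there your reasoning is the same as the paper's. Your (i) is the expected orthogonal-decomposition argument for the diameter bound, and (ii) is the expected boundary-decomposition argument for $\beta_v(K)=Z^+(K,v)\cap\bigcap_{x\in K}B(x,d_K)$, showing a $(K,v)$-directed boundary point cannot sit on $\bd Z^+(K,v)$ and hence must realize the distance $d_K$ to some point of $K$. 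The genuine content is in (iv): the ``descent'' observation that any $p\in L'$ with $p-\lambda_0 v\in\inn K$ already lies in $L=\beta_v(K)$ (because $p\in Z^+(K,v)$ and $\|p-x\|\le d_{L'}=d_K$ for all $x\in K$) is exactly the bridge needed to pull the second-level problem back to the base case (ii), and it is argued cleanly. One small point worth making explicit, which you use silently in (ii) and (iv): for closed convex $A,B$, $\bd(A\cap B)\subset\bd A\cup\bd B$, so a $(K,v)$-directed $p\in\bd\beta_v(K)$, once excluded from $\bd Z^+(K,v)$, must lie on $\bd\bigcap_{x\in K}B(x,d_K)$, and then compactness of $K$ gives $x_0\in K$ with $\|p-x_0\|=d_K$. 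With that spelled out, the proof is complete.
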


The last assertion (in \cite{MS13} only mentioned in the proof of Lemma 4) is clear, since $\beta_w(\beta_v(K))$ is a tight cover of $\beta_v(K)$.

The next lemma introduces the `generalized B\"uckner completion' (now denoted by $\beta$), which was established in \cite{MS13} for finite-dimensional normed spaces. We require it here only for Euclidean spaces (where some of the estimates could be improved, but that is irrelevant).  

\begin{Lemma}\label{L2} 
There are a number $p\in{\mathbb N}$, vectors $v_1,\dots,v_p\in {\mathbb S}^{n-1}$ and constants $\varepsilon_0>0$, $\ell>0$, all depending only on the dimension, such that the following holds. 

Let $K\in{\mathcal K}^\bullet$ and define
$$ K_0:= K, \quad K_i:= \beta_{v_i}(K_{i-1})\enspace\text{for }i=1,\dots,p, \enspace K_p=:\beta(K).$$
Then every boundary point of $\beta(K)$ is a diameter endpoint of $\beta(K)$, hence $\beta(K)$ is a body of constant width $1$ $($containing $K$, by Lemma $\ref{L1})$.

If also $L\in{\mathcal K}^\bullet$ and 
$$ \delta(K,L) \le\varepsilon\le\varepsilon_0,$$
then
$$ \delta(\beta_{v_1}(K),\beta_{v_1}(L))  \le \ell\varepsilon,\qquad \delta(\beta(K),\beta(L))  \le \ell\varepsilon.$$
\end{Lemma}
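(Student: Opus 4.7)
The proof proceeds in three stages. First, I select unit vectors $v_1, \dots, v_p \in \Sn$: an elementary packing argument yields finitely many such vectors, with $p$ depending only on $n$, such that the closed caps $C(v_1), \dots, C(v_p)$ cover $\Sn$. In particular, every $u \in \Sn$ satisfies $\langle u, v_i \rangle \ge 1/2$ for at least one $i$.

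Second, to show that $\beta(K) \in {\mathcal W}^n_1$, I iterate Lemma \ref{L1}. Each $K_i$ is a tight cover of $K_{i-1}$, so $d_{K_p} = 1$, $K \subseteq K_p$, and diameter endpoints of $K_{i-1}$ persist as diameter endpoints of $K_i, \dots, K_p$. A key step is an inductive extension of the completeness statement in Lemma \ref{L1}: exploiting the inclusion $K_0 \subseteq K_{j-1}$, which ensures that $(K_0, v)$-directedness implies $(K_{j-1}, v)$-directedness, one shows that each $K_j$ is $(K_0, v_i)$-complete for every $i \le j$. With the caps chosen sufficiently finely, so that the half-angle of $C(v_i)$ matches the cone of directions from any point of $K_p$ into a ball of radius $c_n$ inside $K_0$, every $x \in \bd K_p$ with outer normal $u \in C(v_i)$ satisfies both $x + \lambda v_i \notin K_p$ for $\lambda > 0$ (because $\langle u, v_i \rangle > 0$) and $x - \lambda v_i \in \mathrm{int}\, K_0$ for some $\lambda > 0$ (because the inradius bound $r_{K_0} \ge c_n$ combined with $d_{K_p} = 1$ places $x$ at controlled distance and angle from the inscribed ball). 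Hence $x$ is $(K_0, v_i)$-directed, and the $(K_0, v_i)$-completeness of $K_p$ forces $x$ to be a diameter endpoint. Every boundary point of $\beta(K)$ being a diameter endpoint yields $\beta(K) \in \mathcal{W}^n_1$.

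Third, for the Lipschitz stability, write
$$ \beta_v(K) \;=\; Z^+(K,v) \cap D_1(K), \qquad D_1(K) := \bigcap_{z \in K} B(z, 1). $$
Both $K \mapsto Z^+(K,v)$ and $K \mapsto D_1(K)$ are $1$-Lipschitz with respect to $\delta$. The intersection of two convex bodies is not globally Lipschitz, but on the subclass whose intersection contains a ball of a fixed positive radius it is Lipschitz with a constant depending only on that radius. Since $K \subseteq \beta_v(K)$ contains a ball of radius $c_n$, one obtains a single-step Lipschitz constant $\ell_1 = \ell_1(n)$. Choosing $\varepsilon_0$ small enough that all intermediate bodies $K_i, L_i$ retain, say, inradius $\ge c_n/2$ and diameter close to $1$, iteration gives $\ell = \ell_1^{\,p}$, depending only on $n$.

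The main technical obstacle is the quantitative stability of $\beta_v$: without uniform non-degeneracy, a small Hausdorff perturbation of $K$ can cause disproportionately large changes in $Z^+(K,v) \cap D_1(K)$. The inradius condition $r_K \ge c_n$ is precisely what prevents this degeneracy and, together with the bounded diameter, yields quantitative Lipschitz control. The constant-width conclusion, in contrast, is a structural argument driven by iterated application of Lemma \ref{L1} and the cap-covering of $\Sn$; its subtlety lies in coordinating the cap aperture with the inradius bound so that the directedness condition is available at every boundary point of $K_p$.
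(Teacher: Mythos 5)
The paper does not actually prove Lemma~\ref{L2}; it cites Moreno--Schneider~\cite{MS13}, locating the constant-width assertion on pp.~264--265 and the Lipschitz estimate in Lemma~5 and the proof of Theorem~6 of that reference. So there is no ``paper's proof'' to compare against line by line. Your overall strategy---choosing a finite set of directions whose caps cover $\Sn$, iterating the B\"uckner completion $\beta_{v_i}$, and then arguing (i) completeness via $(K_0,v_i)$-directedness and (ii) a per-step Lipschitz bound---is indeed the route taken in~\cite{MS13}. However, several specific steps in your sketch do not hold as stated.

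First, the directedness claim is misformulated. You assert that any $x\in\bd K_p$ with outer normal $u\in C(v_i)$ is $(K_0,v_i)$-directed, in particular that $x-\lambda v_i\in\inn K_0$ for some $\lambda>0$. But for the ray $\{x-\lambda v_i:\lambda>0\}$ to enter $\inn K_0$, the vector $v_i$ must be close to the direction from $x$ toward the inscribed ball $B(c,c_n)\subset K_0$, not close to the outer normal $u$. Since $\langle x-c,u\rangle\ge c_n$ and $\|x-c\|\le 1-c_n$, the angle between $u$ and $(x-c)/\|x-c\|$ can be as large as $\arccos(c_n/(1-c_n))$, which for the dimensional constants involved exceeds the aperture $\arcsin(c_n/\|x-c\|)$ within which $-v_i$ would hit $B(c,c_n)$. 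The triangle inequality on spherical distances therefore does not close up, no matter how the cap parameter is chosen. The condition should be that $(x-c)/\|x-c\|$ (not $u$) lies in a suitably fine cap $C(v_i)$; one then verifies that such $v_i$ also satisfies $\langle v_i,u\rangle>0$, and that the ray in direction $-v_i$ enters $B(c,c_n)$. Second, the claim that $K\mapsto D_1(K):=\bigcap_{z\in K}B(z,1)$ is $1$-Lipschitz is asserted without any justification. In general $D_1$ is not even continuous (it can become empty under arbitrarily small perturbations when the diameter exceeds~$1$), and even restricted to $\mathcal K^\bullet$ the Lipschitz constant requires a nontrivial argument exploiting the inradius/circumradius bounds; this is exactly the content of Lemma~5 of~\cite{MS13}. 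Third, the inductive step showing that $K_j$ is $(K_0,v_i)$-complete for all $i\le j$ is passed over quickly, whereas this is the technical heart of the constant-width assertion. The delicate case is a $(K_0,v_i)$-directed boundary point of $K_{j+1}$ that is not a boundary point of $K_j$: if it lies on $\bd D_1(K_j)$ it is automatically a diameter endpoint, but if it lies only on the lateral boundary of the cylinder $Z^+(K_j,v_{j+1})$, further argument is needed. Lemma~\ref{L1} covers two steps; the extension to $p$ steps is precisely what~\cite{MS13} establishes. In short, your approach is the right one and would lead to a proof after substantial work, but as written the sketch has genuine gaps: the directedness cap condition is the wrong one, the Lipschitz property of $D_1$ is unjustified, and the completeness induction is incomplete.
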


This is proved in \cite{MS13}. The first assertion is found on pp. 264--265 (the operator $\beta_0$ appearing there can now be chosen as the identity, since we have assumed that $K\in{\mathcal K}^\bullet$). The second assertion follows from Lemma 5 and the proof of Theorem 6 in \cite{MS13}.

We state an elementary geometric fact about balls, needed later to guarantee the quality of approximations.

Let $B_r$ be a ball of radius $r$, $0<r<1$, let $y\in{\rm bd}\,B_r$ and let $v$ be a unit tangent vector to $B_r$ at $y$. Let $B_1$ be a unit ball such that $B_r\subset B_1$ and $y+\lambda v\in{\rm bd}\,B_1$ for some $\lambda >0$. Let $u_1$ be the outer unit normal vector of $B_1$ at $y+\lambda v$. Then it is easy to see that there exists a number $\alpha(r)>0$ with the property that
\begin{equation}\label{B6}
\lambda \le \alpha(r) \enspace \Rightarrow \enspace \langle v,u_1\rangle \le 1/4.
\end{equation}

\section{Proof of Theorem \ref{T4}}\label{sec4.1}

After all these preparations, we can finish the proof of Theorem \ref{T4}. Let $W_0\in{\mathcal W}^n_1$, $v\in {\mathbb S}^{n-1}$ and $\varepsilon >0$ be given, without loss of generality $\varepsilon < \varepsilon_0$, where $\varepsilon_0$ is the number appearing in Lemma \ref{L2}. In a first step, we replace $W_0$ by the Minkowski combination
$$ W:= (1-\sigma)W_0+\sigma \frac{1}{2}B^n,$$
where we choose $0<\sigma<1$ so small that
\begin{equation}\label{E1} 
\delta(W,W_0)\le\varepsilon/2.
\end{equation}
The body $W$ is again of constant width $1$. It has the ball $(\sigma/2)B^n$ as a summand; in particular, it is smooth.

We define $\varepsilon'$ via
\begin{equation}\label{E2} 
\ell\varepsilon':= \min\{\varepsilon/2, \alpha(\sigma/2)\},
\end{equation}
where $\ell$ is given in Lemma \ref{L2} and $\alpha(\cdot)$ appears in (\ref{B6}).

In a second step, we change $W$ to a convex body nearby, which is still of diameter $1$, but no longer of constant width. We choose a finite set $M\subset \tau(W,S(-v))$ and define the convex body
$$ K:= {\rm conv}(M\cup \tau( W,{\rm cl}\,S(v))\}.$$
Then $K\subset W$, but $K$ has still diameter $1$, since it contains a pair of antipodal points of $W$ with outer normal vectors $\pm u\perp v$. By a proper choice of $M$ we achieve that
\begin{equation}\label{B1}
\delta(W,K) < \varepsilon'.
\end{equation}
Since $r_W>c_n$, we can further have $r_K>c_n$ and thus
\begin{equation}\label{B2}
K\in{\mathcal K}^\bullet.
\end{equation}

Let $v_1,\dots,v_p$ be the vectors provided by Lemma \ref{L2}. Since the assumptions and assertions of that lemma are invariant under rotations, we can assume, without loss of generality, that $v_1=v$. The body $\beta(K)$ constructed according to that lemma is a body of constant width $1$, and since $W$ is of constant width $1$, we have $\beta_v(W) =\beta(W)=W$. According to (\ref{B1}), Lemma \ref{L2} and (\ref{E2}), we get
\begin{equation}\label{E7} 
\delta(W,\beta_v(K)) \le \ell\varepsilon' \le \alpha(\sigma/2)
\end{equation}
and
\begin{equation}\label{E8} 
\delta (W,\beta(K)) \le \ell\varepsilon'\le \varepsilon/2.
\end{equation}
Together with (\ref{E1}), the latter gives 
\begin{equation}\label{E9}
\delta(W_0,\beta(K))\le\varepsilon.
\end{equation}

By Lemma \ref{L1}, the body $\beta_v(K)$ is $(K,v)$-complete (with $v=v_1$). We state that
\begin{equation}\label{B5} 
\tau(\beta_v(K),C(v)) \subset {\rm bd}\,\beta(K).
\end{equation}
For the proof, let $x\in \tau(\beta_v(K),C(v))$. Then there is an outer unit normal vector $u$ of $\beta_v(K)$ at $x$ with $\langle u,v \rangle \ge 1/2$. We assert that $x\in{\rm int}\,Z^+(K,v)$. Otherwise, $x$ is in the boundary of $Z^+(K,v)$, and in fact in the boundary of the cylinder $\{y+\lambda v:y\in K,\,\lambda\in{\mathbb R}\}$. Since $\langle u,v\rangle>0$, $x$ is of the form $x=y+\lambda v$ with $\lambda>0$ and a point $y\in {\rm bd}\,K$, in fact $y\in {\rm bd}\,W$. Since the ball $(\sigma/2) B^n$ is a summand of $W$, there is a translate $B_\sigma$ of this ball contained in $W$ with $y\in B_\sigma$. The distance of $x$ from $W$ is at most $\alpha(\sigma/2)$ by (\ref{E7}), hence $\lambda \le \alpha (\sigma/2)$. The point $x$ is not a boundary point of $W$ (since $y\in {\rm bd}\,W$ and $W$ is strictly convex), hence the normal cone of $\beta_v(K)$ at $x$ is the positive hull of finitely many vectors $u_j$, where $u_j$ is the outer unit normal vector at $x$ of some sphere ${\mathbb S} (z_j,1)$ with $z_j\in M$ and $x\in{\mathbb S}(z_j,1)$. It follows from (\ref{B6}) that $\langle u_j,v\rangle \le 1/4$. Since $u$ is a positive linear combination of the vectors $u_j$, we deduce that $\langle u,v\rangle \le 1/4$. This is a contradiction, which proves that $x\in{\rm int}\,Z^+(K,v)$. But then $x$ is a $(K,v)$-directed boundary point of $\beta_v(K)$. Since $\beta_v(K)$ is $(K,v)$-complete, the point $x$ is a diameter endpoint of $\beta_v(K)$. By Lemma \ref{L1} and induction, it is then also a diameter endpoint, in particular a boundary point, of $\beta(K)$. This completes the proof of (\ref{B5}).

By (\ref{B5}) and the construction of $\beta_v(K)$, $\beta(K)$ we have
\begin{equation}\label{B4}
\tau(\beta(K),C(v)) = \tau(\beta_v(K),C(v))  \subset \bigcup_{z\in  M}  {\mathbb S}(z,1).
\end{equation}
Now let $u\in C(v)$, and let $x\in \tau(\beta(K),C(v))$ be the point where $u$ is attained as outer normal vector. There is at least one point $z\in M$ for which $x\in{\mathbb S}(z,1)$. Suppose, first, that there is only one such point. Then $\|x-z_i\|>1$ for all $z_i\in M\setminus\{z\}$. Therefore, there is a neighbourhood $U$ of $x$ such that $U\cap\bd \beta(K)=U\cap {\mathbb S}(z,1)$. This implies that
\begin{equation}\label{D0} 
\bar\varrho(\beta(K),u,t) = 1\qquad\text{for each } t\in u^\perp \cap\Sn.
\end{equation}
Now suppose that $x\in{\mathbb S}(z_i,1)$ for (at least) two distinct points $z_i\in M$, $i=1,2$.  Then each of the vectors $x-z_1$, $x-z_2$ is an outer normal vector of $\beta(K)$ at $x$. At least one of them, say $x-z_2$, is linearly independent from $u$, and $\langle u,x-z_2\rangle >0$. Then there is a unit vector $t\perp u$ such that $x-z_2\in{\rm pos}\{u,t\}$, which implies that
$$ \bar\varrho(\beta(K),u,t)=0.$$
This completes the proof of Theorem \ref{T4} (with $Q=\beta(K)$).\qed

\section{Proofs of the main results}\label{sec5}

To prove our main results, we need one more piece of preparation. Let $K\in{\mathcal K}^n$. For $u\in\Sn$ and $o\not= t\perp u$ we write $x:=x_K(u)$ and define the two-dimensional plane
$$ L(K,u,t):= x+{\rm lin}\{u,t\}$$
and the two-dimensional closed halfplane
$$ D(K,u,t) :=\{x+\lambda u+\mu t:\lambda\in\R,\,\mu\ge 0\}.$$
Let
$$ P(K,u,t):= D(K,u,t)\cap {\rm relbd}\,(K|L(K,u,t)),$$
where $|$ denotes orthogonal projection.

For $\alpha>0$, we say that $K$ is $\alpha$-{\em wide at} $(u,t)$ if
$$ P(K,u,t)\cap {\rm int}\,B(x-\alpha u,\alpha) =\emptyset.$$

In the following lemma, we may write $\rho_i(K,u,t)$ for $\rho_i(K,x,u,t)$, since each $K\in {\mathcal W}^n_1$ is strictly convex.

\begin{Lemma}\label{L3}
Let $K\in {\mathcal W}^n_1$, let $u\in\Sn$ and $t\in u^\perp\cap\Sn$.

\noindent$\rm (a)$ $\bar\varrho_i(K,u,t)>0$ holds if and only if $K$ is $\alpha$-wide at $(u,t)$ for some $\alpha>0$.

\noindent$\rm (b)$ $\bar\varrho_s(K,u,t)<1$ holds if and only if $K$ is $\alpha$-wide at $(-u,-t)$ for some $\alpha>0$.

\noindent$\rm (c)$ $\varrho_i(K,u,t)>0$ for all $t\in u^\perp\cap\Sn$ holds if and only if $B(x_K(u)-\alpha u,\alpha)\subset K$ for some $\alpha>0$.
\end{Lemma}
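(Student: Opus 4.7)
The plan for all three parts rests on one elementary observation: in the plane $L(K,u,t)$, for a point $z$ in the halfplane $D(K,u,t)$ strictly below the line $x+\mathbb{R}t$ (where $x:=x_K(u)$), the quantity
\[
 r(z):=\frac{\|z-x\|^{2}}{2\langle z-x,-u\rangle}
\]
equals the radius of the unique circle through $x$ and $z$ centred on the ray $R_{-u}$ used in the definition of $\bar\varrho_i$ and $\bar\varrho_s$ in Section~\ref{sec3}. The family of open disks $\{\mathrm{int}\,B(x-\alpha u,\alpha):\alpha>0\}$ in $L(K,u,t)$ is totally ordered by inclusion (smaller sits inside larger, tangent at $x$), so $z\in\mathrm{int}\,B(x-\alpha u,\alpha)$ if and only if $r(z)<\alpha$.

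For part $\rm (a)$, this immediately translates "$K$ is $\alpha$-wide at $(u,t)$" into "$r(z)\ge\alpha$ for every $z\in P(K,u,t)\setminus\{x\}$". The implication $\bar\varrho_i(K,u,t)>0\ \Rightarrow\ \alpha$-wideness is obtained by picking $\alpha_{1}\in(0,\bar\varrho_i(K,u,t))$ and a neighbourhood $U$ of $x$ with $r\ge\alpha_{1}$ on $P\cap U\setminus\{x\}$, and on the compact remainder $P\setminus U$ using $\|z-x\|\ge c>0$ together with $\langle z-x,-u\rangle\le h(K,u)+h(K,-u)=1$, which gives $r(z)\ge c^{2}/2$. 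Then $\alpha:=\min\{\alpha_{1},c^{2}/2\}$ works globally. The reverse implication is immediate from $\bar\varrho_i(K,u,t)=\liminf_{z\to x}r(z)$.

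For part $\rm (b)$, the width-$1$ identities $h(K,w)+h(K,-w)=1$ and $x_K(-u)=x_K(u)-u$ (the latter because any diameter segment of a body of constant width $1$ has length~$1$ and runs between the unique points with outer normals $\pm u$) plugged into (\ref{2.2.a})--(\ref{2.2.b}) under the substitution $w\mapsto -w$ yield the duality
\[
 \bar\varrho_s(K,u,t)+\bar\varrho_i(K,-u,-t)=1,
\]
so "$\bar\varrho_s(K,u,t)<1$" is equivalent to "$\bar\varrho_i(K,-u,-t)>0$", which by $\rm (a)$ is the $\alpha$-wideness of $K$ at $(-u,-t)$.

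For part $\rm (c)$, direction $(\Leftarrow)$ is immediate: if $B(x-\alpha u,\alpha)\subset K$, then in each $2$-plane $L_{x,u,t}$ the planar disk $B(x-\alpha u,\alpha)\cap L_{x,u,t}$ of radius $\alpha$ tangent at $x$ lies inside $K\cap L_{x,u,t}$, so every boundary point $z$ of that section has $r(z)\ge\alpha$ by the nested-disks observation, yielding $\varrho_i(K,u,t)\ge\alpha$. For the converse $(\Rightarrow)$ I would proceed in two steps. First, for each fixed $t$, $\varrho_i(K,u,t)>0$ applied inside the section $K\cap L_{x,u,t}$ gives, via the nested-disks observation and a choice of $\alpha$ small enough to accommodate the section's finite extent, a positive number
\[
 R_K(t):=\sup\bigl\{\alpha>0:\,B(x-\alpha u,\alpha)\cap L_{x,u,t}\subset K\bigr\}>0.
\]
Second, by strict convexity of $K$ the section $K\cap L_{x,u,t}$ depends continuously on $t$ in the Hausdorff metric, and I expect this, combined with the extra room inside a slightly smaller tangent disk, to give lower semicontinuity of $R_K$ on the compact sphere $u^\perp\cap\Sn$. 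Then $\alpha_{\ast}:=\inf_{t}R_K(t)>0$, and since every point of $B(x-\alpha_{\ast}u,\alpha_{\ast})$ lies in $L_{x,u,t}$ for some $t\in u^\perp\cap\Sn$, one obtains $B(x-\alpha_{\ast}u,\alpha_{\ast})\subset K$.

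The principal obstacle is precisely this last lower-semicontinuity step: at $t_{0}$ the witnessing disk is tangent to $\bd K$ at $x$, so one cannot simply perturb $t_{0}$ and keep the containment. I plan to handle this by fixing $R'<R<R_K(t_{0})$, noting that $B(x-R'u,R')\cap L_{x,u,t_{0}}$ sits strictly inside $B(x-Ru,R)\cap L_{x,u,t_{0}}\subset K$ away from the single tangency point $x$, and deducing from continuous variation of the section that $B(x-R'u,R')\cap L_{x,u,t_{n}}\subset K$ for all $t_{n}$ sufficiently close to $t_{0}$, so $R_K(t_{n})\ge R'$.
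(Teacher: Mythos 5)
Your proofs of parts (a) and (b) are correct and follow essentially the same route as the paper: (a) is the direct argument near~$x$ (you spell out the global bound via the formula $r(z)=\|z-x\|^{2}/(2\langle z-x,-u\rangle)$ and $\langle z-x,-u\rangle\le 1$, where the paper just remarks that the converse direction and the far-from-$x$ estimate are clear), and (b) is derived from (a) via the identity $\bar\varrho_s(K,u,t)+\bar\varrho_i(K,-u,-t)=1$, obtained from the support-function formulas exactly as in the paper's second derivation. The converse direction of (c) is also fine.

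The forward direction of (c) is where there is a genuine gap, and you have correctly located it: the lower-semicontinuity of $R_K$ is the crux, and your proposed fix does not actually close it. The difficulty is that the tangency at $x$ is \emph{quadratic}: both $B(x-R'u,R')$ and $B(x-Ru,R)$ osculate $\bd K\cap L_{x,u,t_0}$ at $x$ with boundary height $\sim\tau^2/(2R')$ and $\sim\tau^2/(2R)$ respectively, so the gap between them near $x$ is $\sim c\tau^2\to 0$. Hausdorff convergence of the sections $K\cap L_{x,u,t_n}\to K\cap L_{x,u,t_0}$ gives a \emph{uniform} (in $\tau$) error bound, which is useless against a margin that vanishes like $\tau^2$. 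In other words, the argument controls the sections away from $x$, but says nothing about whether the graph of the section boundary near $x$ dips between the two disks for $t_n\ne t_0$. (Note also that the set $\{(\alpha,t): B(x-\alpha u,\alpha)\cap L_{x,u,t}\subset K\}$ is \emph{closed}, which makes $R_K$ \emph{upper} semicontinuous for free; lower semicontinuity is exactly what is not free here.)

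The paper circumvents this entirely by exploiting the convexity of the sets ${\mathcal D}_x(K,\eta)$ rather than compactness of the sphere of directions. From $\varrho_i(K,u,t)>0$, formula (\ref{9a}) gives, for each $t$, numbers $\eta_t,r_t>0$ with $\rho({\mathcal D}_x(K,\eta),t)\ge r_t$ for $0<\eta\le\eta_t$. Choosing \emph{finitely many} directions $t_1,\dots,t_n$ that positively span $u^\perp$, one takes the minimum of the finitely many $\eta_{t_i},r_{t_i}$ to get uniform $\eta_0,r_0>0$, and then convexity of ${\mathcal D}_x(K,\eta)$ gives $B_u(o,r)\subset{\mathcal D}_x(K,\eta)$ for $0<\eta\le\eta_0$. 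Unwinding (\ref{2.5.3d}) produces a paraboloid of revolution inside $K$ touching $x$, hence a ball $B(x-\alpha u,\alpha)\subset K$. This turns the infinitely-many-directions problem into a finitely-many-directions problem, so no semicontinuity is ever needed. If you want to rescue your formulation, the right move is to replace the l.s.c. step by exactly this convexity argument: once you know $f(\tau t_i)\le c_0\tau^2$ for $|\tau|\le\delta_0$ and finitely many positively spanning $t_i$, the convexity of the sublevel sets of $f$ gives the uniform bound $f(y)\le C\|y\|^2$ on a neighbourhood of $o$, and then $\inf_t R_K(t)>0$ follows directly without any continuity claim about $t\mapsto R_K(t)$.
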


\noindent{\em Proof.} Let $x:=x_K(u)$. Suppose that $\bar\varrho_i(K,u,t)>r>0$. Then there is some $\alpha$, $0<\alpha<r$, such that
$$ P(K,u,t)\cap B(x,2\alpha)\cap \inn B(x-r u,r)=\emptyset.$$
It follows that $P(K,u,t)\cap \inn B(x-\alpha u,\alpha)=\emptyset$, thus $K$ is $\alpha$-wide at $(x,t)$. The converse is clear.

Assertion (b) follows from (a) and the relation 
\begin{equation}\label{1}
\bar\varrho_s(K,u,t) + \bar\varrho_i(K,-u,-t) =1.
\end{equation}
In the plane, this was proved by Zamfirescu \cite{Zam94}. The general case follows from this, since tangential radii of curvature are defined via orthogonal projections to two-dimensional planes, and constant width $1$ is preserved under such projections. 

Alternatively, (\ref{1}) follows from the representation of upper and lower tangential radii of curvature in terms of the support function (see (\ref{2.2.a}) and (\ref{2.2.b})), namely
$$ \bar\varrho_s(K,u,t)= \limsup_{\beta\downarrow 0} \frac{h(K,w_\beta)-\langle x,w_\beta\rangle}{1-\langle u,w_\beta\rangle}, \quad w_\beta= u\cos\beta +t\sin\beta,$$
and a similar relation for $\bar\varrho_i$. Since $K$ is of constant width $1$, we have  $h(K,w)+h(K,-w)=1$ for any unit vector $w$ and $x_K(-u)= x_K(u)-u$, which gives
$$\frac{h(K,w)-\langle x_K(u),w\rangle}{1-\langle u,w\rangle} +
\frac{h(K,-w)-\langle x_K(-u),-w\rangle}{1-\langle u,w\rangle} =1$$
and hence (\ref{1}).

To prove (c), suppose that $\varrho_i(K,u,t)>0$ for $t\in u^\perp\cap\Sn$. For given $t\in u^\perp\cap\Sn$, it follows from (\ref{9a}) that there are positive numbers $\eta_t$ and $r_t$ such that $ \rho({\mathcal D}_x(K,\eta),t) \ge r_t$, and hence $r_tt\in {\mathcal D}_x(K,\eta)$, for $0<\eta\le \eta_t$. We choose $t_1,\dots,t_n\in u^\perp\cap\Sn$ which positively span $u^\perp$. Then there are numbers $\eta_0,r_0>0$ such that $ \rho({\mathcal D}_x(K,\eta),t_i) \ge r_0$ for $0<\eta\le \eta_0$ and $i=1,\dots,n$. We have
$$ o\in{\rm int\,conv}\{r_0t_1,\dots,r_0t_n\}\subset {\mathcal D}_x(K,\eta)$$
for $0<\eta\le\eta_0$, hence there is a number $r>0$ such that
$$ B_u(o,r)\subset {\mathcal D}_x(K,\eta), \qquad B_u(o,r):= B(o,r)\cap u^\perp. $$
By (\ref{2.5.3d}), this implies that
$$ \sqrt{2\eta} B_u(o,r)+x-\eta u\subset K$$
for $0<\eta\le\eta_0$. It follows that, in a neighbourhood of $x$, the body $K$ contains a paraboloid of revolution which contains $x$. From this, it follows that $B(x-\alpha u,\alpha)\subset K$ for some $\alpha>0$. The converse assertion of (c) is clear. 
\qed

Now we are in a position to prove a Baire category result, from which Theorems \ref{T1} and \ref{T2} can be deduced.

\begin{Theorem}\label{T3}
A typical convex body $K$ of constant width $1$ in $\R^n$ has the following property. Let $(x,u)$ be a support element of $K$; then $(x,u)$ is regular and either
$$ \bar\varrho_s(K,u,t) =1\qquad \text{for all } t\in u^\perp\cap\Sn $$
or
$$ \varrho_i(K,u,t)=0\qquad \text{for at least one }t\in u^\perp\cap\Sn. $$
\end{Theorem}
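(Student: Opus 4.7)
\emph{Proof plan.} I will prove Theorem~\ref{T3} by the Baire category method: exhibit the set of $K\in\mathcal{W}^n_1$ violating the conclusion as a countable union of nowhere dense sets, the density of their complements being supplied by the approximation Theorem~\ref{T4}.

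The first step is to translate failure of the dichotomy at a normal $u\in\Sn$ into manifest geometric conditions on $K$ via Lemma~\ref{L3}. By part~(b) of that lemma, ``$\bar\varrho_s(K,u,t)<1$ for some $t\in u^\perp\cap\Sn$'' is equivalent to $K$ being $\alpha$-wide at $(-u,-t)$ for some such $t$ and some $\alpha>0$; by part~(c), ``$\varrho_i(K,u,t)>0$ for every $t\in u^\perp\cap\Sn$'' is equivalent to the inclusion $B(x_K(u)-\beta u,\beta)\subset K$ for some $\beta>0$. Fix a countable set $V\subset\Sn$ dense enough that the caps $C(v)$ cover $\Sn$, and for $m\in\mathbb{N}$ and $v\in V$ put
\begin{align*}
B_{m,v}:=\{K\in\mathcal{W}^n_1:\;&\exists\,u\in C(v),\,\exists\,t\in u^\perp\cap\Sn,\\
&K\text{ is }\tfrac{1}{m}\text{-wide at }(-u,-t)\text{ and }B(x_K(u)-\tfrac{1}{m}u,\tfrac{1}{m})\subset K\}.
\end{align*}
Every $K$ for which the dichotomy fails at some $u\in\Sn$ then lies in some $B_{m,v}$.

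Closedness of $B_{m,v}$ in the Hausdorff metric should follow from continuity of $K\mapsto x_K(u)$ on strictly convex bodies and of $K\mapsto K|L(K,u,t)$, together with compactness of $C(v)$ and of the fibres $u^\perp\cap\Sn$: for $K_n\to K$ in $B_{m,v}$ one extracts convergent witnesses $(u_n,t_n)\to(u,t)$, and both defining conditions transfer to the limit. Density of the complement is a direct application of Theorem~\ref{T4}: given $W_0$ and $\varepsilon$, the body $Q$ produced there satisfies $\delta(W_0,Q)\le\varepsilon$, and at each $u\in C(v)$ either (\ref{3.2}) holds, which forces $\bar\varrho_s(Q,u,t)=1$ for all $t$ and excludes the $\tfrac{1}{m}$-wide clause via the contrapositive of Lemma~\ref{L3}(b), or (\ref{3.1}) holds at some $t_0$, in which case (\ref{3.19}) gives $\varrho_s(Q,u,t_0)\le\bar\varrho_s(Q,u,t_0)=0$ and hence $\varrho_i(Q,u,t_0)=0$, excluding the ball-inclusion clause via the contrapositive of Lemma~\ref{L3}(c). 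Thus $Q\notin B_{m,v}$, and each $B_{m,v}$ is nowhere dense.

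By the Baire property of $(\mathcal{W}^n_1,\delta)$, the complement of $\bigcup_{m\in\mathbb{N},v\in V}B_{m,v}$ is residual, and every $K$ in it satisfies the dichotomy at every $u\in\Sn$. It remains to argue that a typical $K\in\mathcal{W}^n_1$ has every support element regular, which for a strictly convex $K$ reduces to $C^1$-smoothness of $\bd K$; this is an independent residual property of $\mathcal{W}^n_1$, because the Minkowski combinations $(1-\sigma)K+\tfrac{\sigma}{2}B^n$ remain in $\mathcal{W}^n_1$, have $\tfrac{\sigma}{2}B^n$ as a summand and are therefore $C^{1,1}$, while for each $m$ the set of $K$ possessing a non-smooth boundary point whose normal cone has angular diameter at least $1/m$ is closed with empty interior. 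Intersecting the two residual sets proves Theorem~\ref{T3}. The most delicate step I anticipate is the closedness of $B_{m,v}$, since the projection $P(K,-u,-t)$ combines Hausdorff-metric data of $K$ with the relative boundary of $K|L$ restricted to a specific halfplane, and its continuity in the joint variable $(K,u,t)$ must be verified with some care.
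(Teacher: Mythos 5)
Your proposal is correct and follows essentially the same route as the paper's own proof: you decompose the complement of the dichotomy set into the same families of closed sets $\mathcal{B}_{k,m}$ (your $B_{m,v}$), obtain closedness by extracting convergent witnesses $(u_j,t_j)$ and arguing continuity of the projections, and obtain nowhere-denseness from Theorem~\ref{T4} via (\ref{3.19}) and Lemma~\ref{L3} exactly as in the paper's Lemma~\ref{L5}. The only cosmetic deviations are your choice of a countable rather than finite covering family of caps $C(v)$ (which still yields a countable union of nowhere dense sets) and your more explicit sketch of why smoothness is residual in $\mathcal{W}^n_1$, a fact the paper simply cites.
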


\noindent{\em Proof.} First we remark that a convex body of constant width is strictly convex, and a typical convex body of constant width is smooth  (the proof is the same as for general convex bodies; see \cite[Thm. 2.7.1]{Sch14}); hence it has only regular support elements.

We define the set
\begin{eqnarray*}
{\mathcal A} &:=& \big\{K\in {\mathcal W}^n_1: \text{For each $u\in \Sn$,}\\
& & \text{either }\bar\varrho_s(K,u,t)=1\text{ for all } t\in u^\perp\cap\Sn\\
& &  \text{or } \varrho_i(K,u,t)=0 \text{ for some }t\in u^\perp\cap\Sn\big\}.
\end{eqnarray*}
Then we choose finitely many points $p_1,\dots,p_{m_0}\in {\mathbb S}^{n-1}$ such that $\bigcup_{m=1}^{m_0} C(p_m)= {\mathbb S}^{n-1}$, and for $k\in{\mathbb N}$ and $m=1,\dots,m_0$ we define
\begin{eqnarray*}
{\mathcal B}_{k,m} &:=& \big\{K\in {\mathcal W}^n_1: \text{There exists $u\in C(p_m)$ such that}\\
& & \text{there exists $t\in u^\perp\cap\Sn$ such that $K$ is $k^{-1}$-wide at $(-u,-t)$}\\
& &  \text{and } B(x_K(u)-k^{-1}u,k^{-1})\subset K\big\}.
\end{eqnarray*}

Let $K\in{\mathcal W}^n_1\setminus{\mathcal A}$. Then there exists a vector $u\in\Sn$ such that 
\begin{equation}\label{6.1} 
\bar\varrho_s(K,u,t)<1\qquad\text{for some }t\in u^\perp\cap\Sn
\end{equation}
and
\begin{equation}\label{6.2}  
\varrho_i(K,u,t)>0\qquad \text{for all } t\in u^\perp\cap\Sn.
\end{equation}

The vector $u$ is contained in a suitable set $C(p_m)$. By Lemma \ref{L3}, (\ref{6.1}) implies that $K$ is $\alpha$-wide at $(-u,-t)$ for some $\alpha>0$. Also by Lemma \ref{L3}, (\ref{6.2}) implies that $B(x_K(u)-\alpha u,\alpha)\subset K$ for some $\alpha>0$. Thus, $K\in{\mathcal B}_{k,m}$ for suitable $k\in{\mathbb N}$, $m\in\{1,\dots,m_0\}$. Conversely, if $K\in {\mathcal B}_{k,m}$, then $K\notin{\mathcal A}$. Thus
\begin{equation}\label{com} 
{\mathcal W}^n_1 \setminus {\mathcal A}=\bigcup _{k\in{\mathbb N}}\bigcup_{m=1}^{m_0} {\mathcal B}_{k,m}.
\end{equation}

The strategy is now clear: we show that each set ${\mathcal B}_{k,m}$ is closed and without interior points, hence nowhere dense in ${\mathcal W}^n_1$.

\begin{Lemma}\label{L4}
For $k\in{\mathbb N}$, $m\in\{1,\dots,m_0\}$ the set ${\mathcal B}_{k,m}$ is closed.
\end{Lemma}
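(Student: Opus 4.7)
The proof is by sequential closedness in the Hausdorff metric. Take a sequence $(K_j)\subset\mathcal{B}_{k,m}$ with $K_j\to K$ in $\mathcal{W}^n_1$, and for each $j$ pick witnesses $u_j\in C(p_m)$ and $t_j\in u_j^\perp\cap\Sn$ for $K_j\in\mathcal{B}_{k,m}$. Since $C(p_m)$ is closed in $\Sn$ and $\Sn$ is compact, we may pass to a subsequence with $u_j\to u\in C(p_m)$ and $t_j\to t\in\Sn$; orthogonality survives in the limit, so $t\in u^\perp\cap\Sn$. The plan is to show that $(u,t)$ witnesses $K\in\mathcal{B}_{k,m}$.

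Since every body of constant width $1$ is strictly convex, each $v\in\Sn$ is a regular normal vector and the contact map $v\mapsto x_{K}(v)$ is continuous; moreover $x_{K_j}(u_j)\to x_K(u)$ and $x_{K_j}(-u_j)\to x_K(-u)$ by Hausdorff convergence $K_j\to K$ together with strict convexity of the limit. For the first defining condition of $\mathcal{B}_{k,m}$, the balls $B(x_{K_j}(u_j)-k^{-1}u_j,k^{-1})$ then Hausdorff-converge to $B(x_K(u)-k^{-1}u,k^{-1})$; as each is contained in $K_j$ and $K_j\to K$, the limit ball lies in $K$, as required.

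For the $k^{-1}$-wideness condition at $(-u,-t)$, set $L_j:=L(K_j,-u_j,-t_j)$, $D_j:=D(K_j,-u_j,-t_j)$, $P_j:=P(K_j,-u_j,-t_j)$, $B_j:=B(x_{K_j}(-u_j)+k^{-1}u_j,k^{-1})$, and write $L,D,P,B$ for the analogous objects built from $K,u,t$. The affine planes $L_j$ converge to $L$ (their linear direction spaces $\mathrm{lin}\{u_j,t_j\}$ converge to $\mathrm{lin}\{u,t\}$ and the base points $x_{K_j}(-u_j)$ converge to $x_K(-u)$). Hence the orthogonal projections onto $L_j$ converge uniformly on compact sets to the orthogonal projection onto $L$, whence $K_j|L_j\to K|L$ in Hausdorff metric. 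Because $K$ has nonempty interior (it contains a ball of radius $c_n$), $K|L$ is a planar convex body with interior, so its relative boundary also converges in Hausdorff metric to $\mathrm{relbd}(K|L)$. The closed halfplanes $D_j$ converge to $D$, and intersecting with the uniformly bounded arcs $\mathrm{relbd}(K_j|L_j)$ gives $P_j\to P$ in Hausdorff metric. Finally $B_j\to B$, so if there were a point $p\in P\cap\mathrm{int}\,B$, pick $p_j\in P_j$ with $p_j\to p$; for all sufficiently large $j$ one would have $p_j\in\mathrm{int}\,B_j$, contradicting $P_j\cap\mathrm{int}\,B_j=\emptyset$. Therefore $P\cap\mathrm{int}\,B=\emptyset$, which is exactly $k^{-1}$-wideness of $K$ at $(-u,-t)$; combined with the ball inclusion, this places $K$ in $\mathcal{B}_{k,m}$.

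The only delicate step is the middle one: convergence of the projected bodies $K_j|L_j$ when the projection plane itself varies, followed by convergence of the arcs $P_j$ cut off by the moving halfplanes $D_j$. Both reductions depend on continuity of orthogonal projection as a joint function of body and subspace, together with the fact that the limiting planar section $K|L$ has nonempty interior (so that its relative boundary varies Hausdorff-continuously with the body). The remaining pieces — compactness of $C(p_m)\times\Sn$, continuity of the contact map on a strictly convex limit, and passage of disjointness from open sets to the Hausdorff limit — are routine.
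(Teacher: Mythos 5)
Your proof is correct and follows essentially the same strategy as the paper's: pass to subsequences so the witness pairs $(u_j,t_j)$ converge, show the ball inclusion survives by Hausdorff convergence, and show $k^{-1}$-wideness survives by tracking the convergence of the projected bodies $K_j|L_j$, the dividing halfplanes $D_j$, the arcs $P_j$, and the excluded balls. Where the paper frames the wideness step as a contradiction via a small ball around a putative offending point $z\in P(K,-u,-t)$, you phrase it as Hausdorff convergence $P_j\to P$ followed by extracting an approximating sequence $p_j\to p$; these are two wordings of the same argument, and your filling-in of why $P_j\to P$ (the boundary line of $D_j$ passes through the converging boundary point $x_{K_j}(-u_j)$, and the planar projections have uniformly bounded interior) is a helpful expansion of a step the paper leaves implicit.
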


\noindent{\em Proof.} Let $(K_j)_{j\in{\mathbb N}}$ be a sequence in ${\mathcal B}_{k,m}$ that converges to a convex body $K$. Then $K$ is also of constant width $1$. For each $j\in{\mathbb N}$ we can choose a vector $u_j\in C(p_m)$ with the following properties. We can choose a vector $t_j\in u_j^\perp\cap\Sn$ such that $K_j$ is $k^{-1}$-wide at $(-u_j,-t_j)$, and $B(x_{K_j}(u_j)-k^{-1}u_j,k^{-1})\subset K_j$. Since $C(p_m)$ is compact, we can assume (after selecting a subsequence and changing the notation) that the sequence $(u_j)_{j\in{\mathbb N}}$ converges to some vector $u\in C(p_m)$.  Again choosing a subsequence and changing the notation, we can assume that also the sequence $(t_j)_{j\in {\mathbb N}}$ converges to a vector $t$.

We claim that $K$ is $k^{-1}$-wide at $(-u,-t)$. Suppose that this is false. Then there exist a point $z\in P(K,-u,-t)$ and a ball $B$ with center $z$ such that
\begin{equation}\label{4} 
B\subset\inn B(x_K(-u)+k^{-1}u,k^{-1}).
\end{equation}
Now $K_j\to K$ and $u_j\to u$ implies that $x_{K_j}(-u_j)\to x_K(-u)$ and hence
$$ L(K_j,-u_j,-t_j) \to L(K,-u,-t), \qquad D(K_j,-u_j,-t_j) \to D(K,-u,-t) \qquad\text{for }j\to\infty.$$
We conclude that
$$ K_j|L(K_j,-u_j,-t_j) \to K|L(K,-u,-t)\qquad\text{for }j\to\infty.$$
But then (\ref{4}) implies that
$$ P(K_j,-u_j,-t_j)\cap \inn B(x_{K_j}(-u_j)+k^{-1}u_j,k^{-1})\not=\emptyset$$
for sufficiently large $j$, which contradicts the fact that $K_j$ is $k^{-1}$-wide at $(-u_j,-t_j)$. Thus, $K$ is $k^{-1}$-wide at $(-u,-t)$.

Further, from $B(x_{K_j}(u_j)-k^{-1}u_j,k^{-1})\subset K_j$ it follows that $B(x_{K}(u)-k^{-1}u,k^{-1})\subset K$. Thus, $K\in {\mathcal B}_{k,m}$, which shows that the latter set is closed. \qed

\begin{Lemma}\label{L5}
For $k\in{\mathbb N}$, $m\in\{1,\dots,m_0\}$, the set ${\mathcal B}_{k,m}$ is nowhere dense.
\end{Lemma}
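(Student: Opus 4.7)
The plan is to verify directly that $\mathcal{B}_{k,m}$ has empty interior in $\mathcal{W}^n_1$; combined with Lemma \ref{L4}, this yields nowhere denseness. The tool that does all the work is the approximation Theorem \ref{T4}, applied with the distinguished direction $v:=p_m$.

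Given any $K_0\in\mathcal{W}^n_1$ and any $\varepsilon>0$, Theorem \ref{T4} supplies a body $Q\in\mathcal{W}^n_1$ with $\delta(K_0,Q)\le\varepsilon$ such that, for every $u\in C(p_m)$, the tangential radius of curvature $\bar\varrho(Q,u,t)$ exists for every $t\in u^\perp\cap\Sn$ and either equals $1$ for all such $t$ or equals $0$ for at least one such $t$. I claim this $Q$ lies outside $\mathcal{B}_{k,m}$, which, since $K_0$ and $\varepsilon$ were arbitrary, proves the lemma.

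To verify the claim, suppose for contradiction that $Q\in\mathcal{B}_{k,m}$. Then there exist $u\in C(p_m)$ and $t_0\in u^\perp\cap\Sn$ with (i) $Q$ is $k^{-1}$-wide at $(-u,-t_0)$, and (ii) $B(x_Q(u)-k^{-1}u,k^{-1})\subset Q$. Condition (i), via Lemma \ref{L3}(b), gives $\bar\varrho_s(Q,u,t_0)<1$. Condition (ii), via Lemma \ref{L3}(c), gives $\varrho_i(Q,u,t)>0$ for every $t\in u^\perp\cap\Sn$. Now invoke the Theorem \ref{T4} dichotomy at this particular $u$: the alternative $\bar\varrho(Q,u,\cdot)\equiv 1$ contradicts $\bar\varrho_s(Q,u,t_0)<1$, while the alternative $\bar\varrho(Q,u,t')=0$ for some $t'$, combined with the inequality $\varrho_s(Q,x_Q(u),t')\le\bar\varrho_s(Q,u,t')$ from (\ref{3.19}) and with $\varrho_i\le\varrho_s$, forces $\varrho_i(Q,u,t')=0$, contradicting (ii). Either branch yields a contradiction, so $Q\notin\mathcal{B}_{k,m}$.

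I do not anticipate a genuine obstacle: the hard content has been absorbed into Theorem \ref{T4} and Lemma \ref{L3}. The only care required is quantifier matching, in particular observing that the dichotomy in Theorem \ref{T4} holds at \emph{every} $u\in C(p_m)$, hence may be applied to the specific $u$ furnished by the hypothetical membership $Q\in\mathcal{B}_{k,m}$, and that strict convexity of bodies in $\mathcal{W}^n_1$ makes every support element regular so that (\ref{3.19}) is available.
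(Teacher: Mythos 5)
Your proposal is correct and follows essentially the same route as the paper: apply Theorem \ref{T4} with $v=p_m$ to produce a nearby $Q$, then use Lemma \ref{L3}(b),(c) together with inequality (\ref{3.19}) to see that $Q$ violates the defining conditions of $\mathcal{B}_{k,m}$ at every $u\in C(p_m)$. The only cosmetic difference is that you phrase the final verification as a proof by contradiction rather than directly unwinding the dichotomy of Theorem \ref{T4} into the negation of the defining properties of $\mathcal{B}_{k,m}$, which is what the paper does.
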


\noindent{\em Proof.} Since ${\mathcal B}_{k,m}$ is closed by Lemma \ref{L4}, it suffices to prove that ${\mathcal B}_{k,m}$ has empty interior. Let $W_0\in{\mathcal W}^n_1$ and a number $0<\varepsilon\le \varepsilon_0$ be given (the  number $\varepsilon_0$ appears in Lemma \ref{L2}). 

According to Theorem \ref{T4} (with $v=p_m$) there exists a convex body $Q\in{\mathcal W}^n_1$ with 
$$ \delta(W_0,Q)\le \varepsilon$$ 
and with the property that, for each $u\in C(p_m)$, either
$$ \bar\varrho(Q,u,t)=1\qquad\text{for all } t\in u^\perp\cap\Sn$$
or
$$ \bar\varrho(Q,u,t) = 0\qquad \text{for at least one }t\in u^\perp\cap \Sn.$$
By (\ref{3.19}), the latter implies
$$ \varrho(Q,u,t) = 0\qquad \text{for at least one }t\in u^\perp\cap \Sn.$$
By Lemma \ref{L3}, we obtain for each $u\in C(p_m)$ that either
$$ K \text{ is not $k^{-1}$-wide at $(-u,-t)$, for all }t\in u^\perp\cap \Sn$$
or $$ B(x_K(u)-k^{-1}u,k^{-1}) \not\subset K.$$
Thus, $Q\notin {\mathcal B}_{km}$. Since $W_0\in{\mathcal W}^n_1$ was arbitrary and $\varepsilon>0$ can be arbitrarily small, this shows that ${\mathcal B}_{k,m}$ has empty interior. \qed

Since ${\mathcal B}_{k,m}$ is nowhere dense, (\ref{com}) shows that ${\mathcal A}$ is comeagre in ${\mathcal W}^n_1$. This completes the proof of Theorem \ref{T3}.

\vspace{3mm}

\noindent{\em Proof of Theorem} \ref{T1}

\vspace{2mm}

A typical convex body $K$ in ${\mathcal W}^n_1$ has the property stated in Theorem \ref{T3}. As mentioned in Section \ref{sec3}, ${\mathcal H}^{n-1}$-almost all vectors $u\in \Sn$ are ordinary normal vectors of $K$. At an ordinary normal vector $u$, all tangential radii of curvature exist and are finite, hence by Lemma \ref{L0} the same holds for the sectional radii of curvature. Hence, at almost all $u$, either the curvature indicatrix is a unit ball, or at last one 
radius of curvature is zero. \qed

\vspace{3mm}

\noindent{\em Proof of Theorem} \ref{T2}

\vspace{2mm}

A typical convex body $K$ in ${\mathcal W}^n_1$ has the property stated in Theorem \ref{T3}. As mentioned in Section \ref{sec3}, ${\mathcal H}^{n-1}$-almost all boundary points of $K$ are normal points. Let $x$ be a normal boundary point of $K$, and let $u$ be the outer unit normal vector to $K$ at $x$ (it is unique for typical convex bodies in ${\mathcal W}^n_1$). For $t\in u^\perp\cap\Sn$ we have $\kappa(K,u,t)=1/\varrho(K,u,t)$ and $\varrho(K,u,t) \le \bar \varrho_s(K,u,t)$ by (\ref{3.19}), further $\bar\varrho_s(K,u,t)\le 1$ since $K$ is of constant width $1$. Hence, the curvatures of $K$ at $x$ are positive. Since $x$ is a normal point, they are also finite. Now it follows from Lemma \ref{L0} that at $u$ all the tangential radii of curvature exist and are positive and finite. Since $K$ has the property stated in Theorem \ref{T3}, they can only be $1$. This means (also by Lemma \ref{L0}) that the curvature indicatrix of $K$ at $x$ is a unit ball, hence all radii of curvature at $x$ are equal to $1$.

A body of constant width is strictly convex, and a typical convex body of constant width $1$ is smooth. Hence, we can assume that $K$ is smooth and strictly convex. Therefore, the antipodal map $\psi$ of $K$, which associates with every boundary point $x$ of $K$ with outer unit normal vector $u$ the unique boundary point $\psi(K)$ with outer normal vector $-u$, is a homeomorphism. It maps the set ${\rm bd}_nK$ of normal boundary points, which has full measure and hence is uncountable and dense in $\bd K$, to a set which is also uncountable and dense in $\bd K$. Let $x\in {\rm bd}_nK$, let $u$ be the outer unit normal vector at $x$ and let $t$ be a unit tangent vector at $x$. As shown above, $\bar\varrho(K,u,t)=1$. From (\ref{1}) (and the existence of the tangential radii of curvature at $u$) it follows that $\bar\varrho(K,-u,-t)=0$. Thus, all radii of curvature at $\psi(x)$ are zero, from which the last assertion of Theorem \ref{T2} follows.  \qed

\section{Appendix}

Here we provide an auxiliary result on convergence of convex sets, which has been used in the proof of Lemma \ref{L0}. For non-empty closed convex sets $K_i,K\subset\R^n$, $i\in{\mathbb N}$, the limit relation s-$\lim_{i\to\infty} K_i=K$ is defined by
$$ \lim_{i\to \infty} h(K_i,u)= h(K,u)\qquad\text{for }u\in\Sn,$$
where $h$ denotes the support function, and if $o\in K_i,K$, then the limit relation r-$\lim_{i\to\infty} K_i=K$ is defined by
$$ \lim_{i\to \infty} \rho(K_i,u)= \rho(K,u)\quad\text{for }u\in\Sn,$$
where $\rho$ denotes the radial function. In the general case, where the sets $K_i$ need not be bounded or $o$ need not be an interior point of $K$, none of these types of convergence implies the other. This is different for convex bodies containing $o$ in the interior.

In the proof of the following lemma,  we denote by $K^\circ$ the polar body of a convex body $K$ with $o\in \inn K$, and we make use of the fact that
\begin{equation}\label{7.0} 
\rho(K,u) =\frac{1}{h(K^\circ,u)}\qquad\text{for }u\in \Sn,
\end{equation}
see \cite[(1.52)]{Sch14}. 

\vspace{3mm}

\begin{Lemma}\label{L7.1}
Let $K_i,K \in \K^n$ and suppose that $o\in \inn K$ and $o\in K_i$ for $i\in{\mathbb N}$. Then the relations
\begin{equation}\label{7.1}  
\text{\rm s-}\lim_{i\to\infty} K_i=K
\end{equation}
and
\begin{equation}\label{7.2}  
\text{\rm r-}\lim_{i\to\infty} K_i=K
\end{equation}
are equivalent.
\end{Lemma}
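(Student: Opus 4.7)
The plan is to use the polarity identity (7.0) as a bridge between radial and support functions, together with two standard facts about convex bodies containing $o$ in their interior: (i) the polarity map $K \mapsto K^\circ$ is a homeomorphism, with respect to the Hausdorff metric, on the space of convex bodies containing $o$ in the interior; and (ii) pointwise convergence on $\Sn$ of a sequence of support functions of convex bodies to the support function of another convex body implies uniform convergence on $\Sn$ (and hence Hausdorff convergence of the bodies). Fact (ii) follows from the uniform-convergence theorem for pointwise-convergent convex functions on $\R^n$, applied via positive homogeneity of support functions.

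For (7.1) $\Rightarrow$ (7.2): By (ii), the pointwise convergence $h(K_i,\cdot) \to h(K,\cdot)$ upgrades to uniform convergence on $\Sn$. Since $o \in \inn K$ gives $h(K,\cdot) \geq r > 0$ on $\Sn$ for some $r$, uniform convergence forces $h(K_i,\cdot) \geq r/2 > 0$ for $i$ large, so $o \in \inn K_i$ from some index on. Then (i) yields $K_i^\circ \to K^\circ$ in the Hausdorff metric, in particular $h(K_i^\circ,u) \to h(K^\circ,u)$ pointwise on $\Sn$. Reciprocating via (7.0) produces (7.2).

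For (7.2) $\Rightarrow$ (7.1): The first, and main, obstacle is to show that $o \in \inn K_i$ for $i$ past some index, since without this the polar body $K_i^\circ$ is unbounded and (7.0) cannot be used directly. To handle this, I would fix vectors $u_1,\dots,u_m \in \Sn$ that positively span $\R^n$. Because $\rho(K,u_j) > 0$ for each $j$, the hypothesis (7.2) gives $\rho(K_i,u_j) \geq \rho(K,u_j)/2$ for large $i$, so $K_i$ contains each of the points $(\rho(K,u_j)/2)u_j$; by convexity it then contains the convex hull of $\{o\}$ together with these points, which is a neighbourhood of $o$. With $o \in \inn K_i$ secured, (7.0) converts (7.2) into pointwise convergence $h(K_i^\circ,u) \to h(K^\circ,u)$ on $\Sn$; fact (ii) upgrades this to Hausdorff convergence $K_i^\circ \to K^\circ$; and fact (i), together with the bipolar identities $K_i^{\circ\circ} = K_i$ and $K^{\circ\circ} = K$, yields $K_i \to K$ in the Hausdorff metric, hence (7.1).
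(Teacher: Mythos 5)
Your proof is correct and follows essentially the same route as the paper's: both directions pass through the polar bodies via the identity $\rho(K,u)=1/h(K^\circ,u)$, using the continuity of polarity on bodies containing $o$ in the interior, and the key step in the reverse direction is to first secure $B(o,r)\subset K_i$ for large $i$ from the radial convergence at finitely many positively spanning directions. If anything, you are a bit more explicit than the paper, which merely says the finitely many vectors are ``suitably chosen'' where you spell out that they should positively span $\R^n$, and which obtains the support-function convergence of the $K_i$ from that of the polars by applying the already-proved forward implication to $K_i^\circ$ rather than invoking the bipolar identity directly; these are cosmetic differences.
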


\begin{proof}  Suppose that (\ref{7.1}) holds. Then $K_i\to K$ in the Hausdorff metric, by \cite{Sch14}, Thm. 1.8.15 and Lemma 1.8.14. Since $o\in\inn K$, there are numbers $r,R>0$ such that $B(o,r)\subset K_i\subset B(o,R)$ for almost all $i$. For these $i$, the polar sets $K^\circ_i$ are convex bodies satisfying $B(o,1/R)\subset K^\circ_i\subset B(o,1/r)$. On the space of convex bodies containing $o$ in the interior, the polarity mapping is continuous, hence we conclude that $K_i^\circ\to K^\circ$ in the Hausdorff metric, thus $\text{s-}\lim_{i\to\infty} K_i^\circ=K^\circ$, which implies  $\text{r-}\lim_{i\to\infty} K_i=K$ by (\ref{7.0}). Thus (\ref{7.2}) holds. 

Suppose that (\ref{7.2}) holds. Let fixed vectors $x_1,\dots,x_m\in {\mathbb S}^{n-1}$ be given. Since $\lim_{i\to\infty} \rho(K_i,\cdot)>0$, there is a number $\varepsilon>0$ such that $\rho(K_i,x_r)\ge \varepsilon$ for $r=1,\dots,m$ and for all sufficiently large $i$. If $x_1,\dots,x_m$ are suitably chosen, this together with the convexity of the sets $K_i$ implies the existence of a number $r>0$ such that $B(o,r)\subset K_i$ for almost all $i$. Now (\ref{7.2}) and (\ref{7.0}) yield that $\text{s-}\lim_{i\to\infty} K_i^\circ=K^\circ$. As already shown, this implies $\text{r-}\lim_{i\to\infty} K_i^\circ=K^\circ$, which in turn implies (\ref{7.1}).
\end{proof}

\noindent Authors' addresses:\\[2mm]
Imre B\'{a}r\'{a}ny\\
R\'{e}nyi Institute, Hungarian Academy of Sciences\\
POB 127, 1364 Budapest, Hungary\\[1mm]
and\\[1mm]
Department of Mathematics, University College London\\
Gower Street, London, WC1E 6BT, UK\\
E-mail: barany@renyi.hu\\[3mm]
Rolf Schneider\\
Mathematisches Institut, Albert-Ludwigs-Universit{\"a}t\\
D-79104 Freiburg i. Br., Germany\\
E-mail: rolf.schneider@math.uni-freiburg.de

\end{document}